\theoremstyle{definition}
\newtheorem{definition}{Definition}[]
\newtheorem*{assumption*}{Assumption}
\newtheorem*{condition*}{Condition}
\theoremstyle{plain}
\newtheorem{theorem}[]{Theorem}
\newtheorem{proposition}[definition]{Proposition}
\newtheorem{lemma}[definition]{Lemma}
\theoremstyle{remark}
\newtheorem{remark}{Remark}
\newcommand{\Z}{\mathbb{Z}}
\newcommand{\R}{\mathbb{R}}
\newcommand{\E}{\mathbb{E}}
\newcommand{\floor}[1]{\lfloor #1 \rfloor}
\newcommand{\pconv}{\xrightarrow{\mathbb{P}}}
\newcommand{\Var}{\operatorname{Var}}
\newcommand{\Cov}{\operatorname{Cov}}
\newcommand{\deq}{\overset{d}{=}}
\newcommand{\wconv}{\Rightarrow}
\newcommand{\beps}{\boldsymbol{\epsilon}}
\title{Rough Hurst function estimation\footnote{The majority of this work was done while both authors were employed at RWTH Aachen University, Germany, funded by the Federal Ministry of Education and Research (BMBF) and the Ministry of Culture
and Science of the German State of North Rhine-Westphalia (MKW) under the Excellence Strategy of
the Federal Government and the L\"ander, project StUpPD 413-22.}}
\author[1]{Fabian Mies}
\author[2]{Benedikt Wilkens}
\affil[1]{\footnotesize Delft University of Technology}
\affil[2]{\footnotesize RWTH Aachen University}
\begin{document}

\maketitle

\begin{abstract}
\noindent
    The fractional Brownian motion (fBm) is parameterized by the Hurst exponent $H\in(0,1)$, which determines the dependence structure and regularity of sample paths. Empirical findings suggest that the Hurst exponent may be non-constant in time, giving rise to the so-called multifractional Brownian motion (mBm). The Itô-mBm is an alternative to the classical mBm, and has been shown to admit more intuitive sample path properties in case the Hurst function is rough. In this paper, we show that the Itô-mBm also allows for a simplified statistical treatment compared to the classical mBm. In particular, estimation of the local Hurst parameter $H(t)$ with Hölder exponent $\eta>0$ achieves rates of convergence which are standard in nonparametric regression, whereas similar results for the classical mBm only hold for the smoother regime $\eta>1$. Furthermore, we derive an estimator of the integrated Hurst exponent $\int_0^t H(s)\, ds$ which achieves a parametric rate of convergence, and use it to construct goodness-of-fit tests. 
\end{abstract}

\section{Introduction}

Fractional Brownian motion (fBm) is a centered Gaussian process $B^H_t$ with covariance function $\Cov(X_s, X_t) = \frac{1}{2} (|t|^{2h} + |s|^{2H} - |t - s|^{2H})$, for a scale parameter $\sigma^2>0$ and a so-called Hurst-exponent $H\in(0,1)$. 
It admits the Mandelbrot-van Ness representation
\[
    B_t^H = \frac{1}{\Gamma(H+\frac{1}{2})}\int_{-\infty}^t \sigma\, [ (t-s)_+^{H-\frac{1}{2}} - (-s)_+^{H - \frac{1}{2}}]\, dB_s,
\]
where $B_s$ is a standard Brownian motion, and $(x)_+ = x\cdot \mathds{1}_{x>0}$.
The fBm and related processes have been widely applied in diverse fields, e.g.\ in modeling turbulent flows \citep{corcuera_asymptotic_2013,chevillard_regularized_2017} or stochastic volatility in financial markets \citep{gatheral2018volatility}. 
The Hurst exponent $H$ is of major interest, as it governs the long-range dependence of the process, the regularity of sample paths, and the self-similarity.

In some applications, e.g.\ when analyzing stock market efficiency \citep{frezza_fractal_2021}, sun-spot data \cite{Bibinger2019}, or network traffic \citep{Bianchi2004}, 
empirical evidence suggests that a single Hurst exponent $H$ is not sufficient, and a temporally-varying Hurst exponent $H_t$ should be considered. 
A nonstationary generalization of the fBm, the multifractional Brownian motion (mBm), has been introduced by \cite{Peltier1995} as 
\begin{align}
     X_t = \int_{-\infty}^t \sigma\, [ (t-s)_+^{H_t-\frac{1}{2}} - (-s)_+^{H_t - \frac{1}{2}}]\, dB_s. \label{eqn:def-classical}
\end{align}
See also \cite{Stoev2006} for related definitions of mBm.
Recently, \cite{Ayache2018} suggested an alternative nonstationary generalization of the fBm, given by
\begin{align}
     X_t = \int_{-\infty}^t \sigma_s\, [ (t-s)_+^{H_s-\frac{1}{2}} - (-s)_+^{H_s - \frac{1}{2}}]\, dB_s. \label{eqn:def-Ito}
\end{align}
Following \cite{loboda2021}, we refer to \eqref{eqn:def-classical} as the classical mBm, and to \eqref{eqn:def-Ito} as Itô-mBm. 
Both processes behave locally like a fBm, i.e.\ $h^{-H_t} (X_{t+h}-X_t) \wconv B_h^{H_t}$ as $h\to 0$. 
Thus, they are both valid candidate models for nonstationary extensions of the fBm.
Probabilistic and statistical analysis of the classical mBm is facilitated by the fact that its covariance function admits an explicit expression \citep[Thm.~4.1]{Stoev2006}. 
On the other hand, the Itô-mBm is well-defined as Itô integral if the Hurst exponent $H_t$ is an (adapted) stochastic process. 
Moreover, the latter process has been shown in \cite{loboda2021} to have attractive analytical features, see also \cite{ayache2021}.
In particular, the local Hölder coefficient at time $t$ is $\min(\eta,H_t)$ for the classical mBm \citep[Prop.~13]{Herbin2006}, and simply $H_t$ for the Itô-mBm.
The goal of this paper is to demonstrate that the Itô-mBm is not only preferable from an analytical point of view, but also allows for improved statistical inference compared to the classical mBm.

Estimators for fBm from low-frequency observations $X_1, \ldots, X_n$ are reviewed and compared by \cite{Bardet2018}. 
Optimality of the MLE has been established by \cite{Cohen2013}, and by \cite{brouste2018local} for the high-frequency regime $X_{1/n},\ldots, X_1$. 
Local nonparametric estimators for the classical mBm \eqref{eqn:def-classical} have be introduced by \cite{Coeurjolly2005}, \cite{bertrand_local_2013} and \cite{Bardet2013}.
More recently, \cite{shen2020} developed rate-optimal local nonparametric estimators of $H_t$ exploiting higher-order smoothness. 
\cite{Lebovits2017} estimate the global regularity $\min_t H_t$, and \cite{bertrand2018} describe a goodness-of-fit test for the Hurst function.
Further works on estimation of the mBm include, among others, \cite{pianese2018,garcin_estimation_2017,bianchi_modeling_2013}.
Estimation for some alternative non-stationary extensions of fBm allowing for irregular $H_t$ is studied by \cite{Benassi2000} and \cite{ayache_identification_2004,ayache_central_2007}.

The limitation of all works on the classical mBm \eqref{eqn:def-classical} is that they require $t\mapsto H_t$ to be Hölder continuous with exponent $\eta>\sup_t H_t$. 
This restriction is necessary to ensure a good local approximation of the process by a stationary fBm, see e.g.\ \citep[eqn.~7.14]{Bardet2013}. 
For the Itô-mBm, the only statistical treatment to date is due to \cite{ayache_uniformly_2023}, showing uniform consistency of a localized Hurst estimator. 
Their estimator is analogous to the estimator we propose below, but the rate derived therein is slower, indicating that their asymptotic analysis is suboptimal. 
Here, we study two estimators $\widehat{H}_n(u)$ and $\widehat{H}_n^\dagger(u)$ based on local polynomial regression, which achieve the rate $n^{-\eta/(2\eta+1)}$ for $\eta\leq 1$ (exactly for $\widehat{H}_n^\dagger$, and up to logarithmic factors for $\widehat{H}_n$).
This rate is standard in nonparametric estimation, and our results show that estimation of the Itô-mBm works \textit{just as expected} -- 
in stark contrast to estimation of the classical mBm, where estimators in the regime $\eta<1$ admit non-standard rates of convergence \citep{Bardet2013}.
In view of its elegant analytical and statistical properties, we propose the Itô-mBm as canonical nonstationary variant of the fBm. 

Beyond the local estimation of $H_t$, we derive an estimator $\widehat{\mathcal{H}}(u)$ of the integrated parameter $\mathcal{H}(u)=\int_0^u H_v\, dv$, and the error $\widehat{\mathcal{H}}-\mathcal{H}$ converges to a Gaussian process at rate $\sqrt{n}$. 
We use this estimator to construct a changepoint test for constant Hurst exponent, and goodness-of-fit test for the function $t\mapsto H_t$. 
An important feature of both tests is that they are robust to a non-constant volatility $\sigma_t$, which constitutes a nuisance under the null hypothesis. 
Our mathematical results are based on a functional central limit theorem for locally stationary time series established in \cite{mies_strong_2024}, and the proof technique linking methods for stochastic processes in discrete and continuous time might be of independent interest.

The pointwise nonparametric estimator of the Hurst exponent is studied in Section \ref{sec:local-est}. 
The integrated parameter estimator is introduced in Section \ref{sec:integrated}, including the changepoint test and the goodness-of-fit test.
All technical proofs are gathered in the Appendix.

\section{Local nonparametric estimation}\label{sec:local-est}

Performing high-frequency inference for the the Itô-mBm \eqref{eqn:def-Ito}, we want to estimate the exponent $H_s$ nonparametrically, using discrete observations $X_{\frac{1}{n}}, \ldots, X_{\frac{n}{n}}$. 
For simplicity of exposition, we suppose that $X_{-\frac{k}{n}}$ for $k=1,2,3$ are also observable, and we define the second order increments as
\begin{align*}
    \chi_{i,n} &= X_{\frac{i}{n}} - 2 X_{\frac{i-1}{n}} + X_{\frac{i-2}{n}}, \qquad i=1,\ldots, n, \\
    \widetilde{\chi}_{i,n} &= X_{\frac{i}{n}} - 2 X_{\frac{i-2}{n}} + X_{\frac{i-4}{n}}, \qquad i=1,\ldots, n.
\end{align*}
Our estimation procedure is based on the change-of-frequency principle introduced by \cite{KentWood1997}, which is a special case of the quadratic variation estimator of \cite{istas1997}, and has also been used by \cite{coeurjolly_estimating_2001}: If $H_s=H$ and $\sigma_s=\sigma$ are constant, the self-similarity of the fBm yields $\E(\widetilde{\chi}_{i,n}^2)/ \E(\chi_{i,n}^2) = 2^{2H}$, such that the moment estimator given by $\widehat{H}=\frac{1}{2}\log_2 \left(\sum_i \widetilde{\chi}_{i,n}^2 /\sum_i {\chi}_{i,n}^2\right)$ is consistent at rate $\sqrt{n}$.
For estimation of the Itô-mBm, we use the same idea, but localize the moment estimator around time $u\in(0,1)$. 
To this end, we use a kernel function $K:\R\to[0,\infty)$ satisfying
\begin{align}
    \begin{split}
        \int K(x)\, dx =1, \qquad K(x)=0 \text{ for } |x|>1, \\
    \{x: K(x)>0\} \text{ has positive Lebesgue measure}.
    \end{split}\label{ass:kernel} \tag{K}
\end{align}and a bandwidth $b=b_n>0$, denote $K_b(x)=\frac{1}{b} K(\frac{x}{b})$.
For a bandwidth $b=b_n>0$, denote $K_b(x) = \tfrac{1}{b} K(\tfrac{x}{b})$, and for any $u\in(0,1)$, consider the locally weighted polynomial regression \citep{fan_local_2018,Tsybakov2008}
\begin{align*}
    \min_q \sum_{i=1}^n K_{b_n}(\tfrac{i}{n}-u) \left\| \begin{pmatrix}
        \chi_{i,n}^2 \\ \widetilde{\chi}_{i,n}^2
    \end{pmatrix}  - q(\tfrac{i}{n}) \right\|^2,
\end{align*}
where the minimum $q^*$ is determined among all polynomials $q:(0,1)\to \R^2$ of degree $l$
We use $\widehat{\phi}_n(u)=q^*(u)$ as moment estimator at location $u$.
It is well known \citep[Lem.~1.3~\&~Prop.~1.12]{Tsybakov2008} that the estimator admits the linear representation
\begin{align*}
    \widehat{\phi}_n(u) = \sum_{i=1}^n w_{i,n}(u) \begin{pmatrix}
        \chi_{i,n}^2 \\ \widetilde{\chi}_{i,n}^2
    \end{pmatrix},
\end{align*}
and the weights satisfy, for some universal $C>0$, and for all $u\in [b_n, 1-b_n]$,
\begin{enumerate}[(i)]
    \item $\sup_{i} |w_{i,n}(u)| \leq \frac{C}{nb_n}$,
    \item $\sum_{i=1}^n |w_{i,n}(u)| \leq C$,
    \item $w_{i,n}(u)=0$ for $|\frac{i}{n}-u| > b_n$
    \item $\sum_{i=1}^n w_{i,n}(u)=1$ and $\sum_{i=1}^n (\frac{i}{n}-u)^k w_{i,n}(u)=0$ for $k=1,
\ldots,l$.
\end{enumerate}

\begin{remark}\label{rem:uniform}
    If $K$ is supported on $[-1,0]$ such that still $\{x\in[-1,0]:K(x)>0\}$ has positive Lebesgue measure, then the latter properties (i)--(iv) hold for all $u\in[b_n, 1]$.
    Similarly, if $K$ is supported on $[0,1]$, then (i)--(iv) hold for all $u\in [0,1-b_n]$.
    Moreover, if $K(x)>K_0 \mathds{1}(|x|<\tau)$ for some small $\tau>0$ and $K_0>0$, then properties (i)--(iv) hold for all $u\in [0,1]$.
\end{remark}

We estimate $H_u$ via the log-ratio estimator
\begin{align*}
    \widehat{H}_n(u) 
    &= \left(\frac{1}{2} \log_2 \left(\frac{\widehat{\phi}_{n,2}(u)}{\widehat{\phi}_{n,1}(u)} \right) \right)\;\vee 0 \; \wedge 1.
\end{align*}

\begin{theorem}\label{thm:rate}
    Suppose that $v\mapsto \theta_v = (\sigma_v, H_v)$ is Hölder continuous with exponent $\eta\in(0,l+1]$, and that $0<\underline{H} \leq H_v \leq \overline{H} < 1$ for all $v$, and $0<\underline{\sigma}^2\leq \sigma_v^2 \leq \overline{\sigma}^2<\infty$.
    If $b_n \ll n^{q-1}$ for some $q\in(0,1)$, then for any $p\geq 2$,
    \begin{align*}
        \widehat{H}_n(u) 
        &= H_u + \mathcal{O}_{L_p}\left( \log(n)^{\lceil \eta\rceil} b_n^{\eta} + \frac{1}{\sqrt{n\, b_n}} \right) + \mathcal{O}\left( \log(n) n^{-(1\wedge \eta)} \right). 
    \end{align*}
    The bound holds uniformly for $\theta \in [\underline{\sigma}^2,\overline{\sigma}^2] \times [\underline{H},\overline{H}]$, and $u\in[b_n,1-b_n]$.
    The upper bound is minimized for $b_n \asymp n^{-\frac{1}{2\eta+1}} \log(n)^{-\frac{2\lceil \eta\rceil}{2\eta+1}}$, so that
    \begin{align*}
        \widehat{H}_n(u) 
        &= H_u + \mathcal{O}_{L_p}\left( \log(n)^{\frac{\lceil \eta\rceil}{2\eta+1}} n^{-\frac{\eta}{2\eta+1}} \right).
    \end{align*}
\end{theorem}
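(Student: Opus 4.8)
The plan is to control the estimation error by splitting it into a deterministic bias term and a stochastic fluctuation term, via the linear representation $\widehat\phi_n(u) = \sum_i w_{i,n}(u) (\chi_{i,n}^2, \widetilde\chi_{i,n}^2)^\top$. First I would compute $\E(\chi_{i,n}^2)$ and $\E(\widetilde\chi_{i,n}^2)$ for the Itô-mBm. The key structural fact is that, because the defining integral \eqref{eqn:def-Ito} is an Itô integral, the second-order increment $\chi_{i,n}$ has variance $\int_{-\infty}^{i/n} \sigma_s^2 [\,(\tfrac{i}{n}-s)_+^{H_s-1/2} - 2(\tfrac{i-1}{n}-s)_+^{H_s-1/2} + (\tfrac{i-2}{n}-s)_+^{H_s-1/2}\,]^2\, ds$ by Itô isometry — no cross terms involving the covariance structure of fBm appear, unlike for the classical mBm. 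A change of variables $s = i/n - (r/n)$ then shows that $\E(\chi_{i,n}^2) = n^{-2H_{i/n}}\,\sigma_{i/n}^2\, c(H_{i/n}) (1 + \text{error})$, where $c(H) = \int_0^\infty [(r)_+^{H-1/2} - 2(r-1)_+^{H-1/2} + (r-2)_+^{H-1/2}]^2 dr$ is the fBm constant, and the error is $\mathcal{O}(\text{Hölder-modulus of } \theta \text{ over the relevant window})$; the same holds for $\widetilde\chi$ with the factor $n^{-2H_{i/n}}$ replaced by $(2/n)^{2H_{i/n}}$ times a constant. Crucially, the ratio $\E(\widetilde\chi_{i,n}^2)/\E(\chi_{i,n}^2) = 2^{2H_{i/n}}(1 + \text{error})$, so that $\tfrac12 \log_2$ of the ratio recovers $H_{i/n}$ up to the approximation error, which is where the $\log(n) n^{-(1\wedge\eta)}$ term originates (the Hölder window has width of order $\log(n)/n$ because the fBm kernel decays polynomially, so one must truncate the integral at distance $\asymp \log(n)/n$ and bound the tail).

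Next I would handle the local polynomial smoothing. Writing $\phi(v) = (\E\chi^2, \E\widetilde\chi^2)$ evaluated with parameters $\theta_v$, properties (i)–(iv) of the weights give the standard local-polynomial bias bound: since $\phi$ inherits Hölder-$\eta$ regularity from $\theta$ (with $\eta \le l+1$ matching the polynomial degree), $\sum_i w_{i,n}(u)\phi(i/n) - \phi(u) = \mathcal{O}(b_n^\eta)$ — the $\log(n)^{\lceil\eta\rceil}$ factor arises from differentiating the map $H\mapsto n^{-2H}$, each derivative producing a factor $\log n$, and one needs up to $\lceil\eta\rceil$ such derivatives when Taylor-expanding. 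For the stochastic term I would write $\chi_{i,n}^2 - \E\chi_{i,n}^2$ and use that $\chi_{i,n}$ is (conditionally) Gaussian-like with the stated variance, so the $L_p$ norm of $\sum_i w_{i,n}(u)(\chi_{i,n}^2 - \E\chi_{i,n}^2)$ is controlled via a Rosenthal / Burkholder-type inequality together with $\sup_i|w_{i,n}| \le C/(nb_n)$ and $\sum_i|w_{i,n}|\le C$, yielding the $(nb_n)^{-1/2}$ rate; some care is needed for the dependence between the $\chi_{i,n}$ at nearby indices, but since the fBm kernel gives summable correlations after the truncation, a blocking argument or the locally-stationary FCLT machinery referenced in the introduction (\cite{mies_strong_2024}) handles it. Finally I would linearize $x\mapsto \tfrac12\log_2 x$ around $\E\widehat\phi_{n,2}(u)/\E\widehat\phi_{n,1}(u) = 2^{2H_u}(1+\text{error})$: since $\widehat\phi_{n,1}(u)$ stays bounded away from $0$ with high probability (using $\underline H, \underline\sigma$), a first-order Taylor expansion with a remainder controlled in $L_p$ transfers the additive error bounds on $\widehat\phi_n$ into the additive error bound on $\widehat H_n(u)$; the truncation to $[0,1]$ only improves the bound. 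Combining the three error sources — $\log(n)^{\lceil\eta\rceil}b_n^\eta$ (smoothing bias), $(nb_n)^{-1/2}$ (variance), and $\log(n) n^{-(1\wedge\eta)}$ (fBm-kernel truncation) — gives the claim, and optimizing $b_n$ by balancing $\log(n)^{\lceil\eta\rceil}b_n^\eta \asymp (nb_n)^{-1/2}$ yields $b_n \asymp n^{-1/(2\eta+1)}\log(n)^{-2\lceil\eta\rceil/(2\eta+1)}$ and the final rate.

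The main obstacle I anticipate is establishing the variance expansion $\E(\chi_{i,n}^2) = n^{-2H_{i/n}}\sigma_{i/n}^2 c(H_{i/n})(1+\text{error})$ with a sharp control of the error term, uniformly in $i$ and over the parameter class — in particular, disentangling the contribution of the near-diagonal part of the integral (where $s$ is within $\mathcal{O}(\log(n)/n)$ of $i/n$, giving the main term and a Hölder-modulus error) from the far tail (where $H_s$ varies a lot but the integrand is tiny, giving the $\log(n)n^{-(1\wedge\eta)}$ term). A secondary obstacle is propagating an $L_p$ bound, rather than just a probability bound, through the nonlinear log-ratio map while keeping uniformity over $\theta$ and $u$; this requires quantitative lower tail control on $\widehat\phi_{n,1}(u)$, which again leans on the Gaussian-type concentration of the $\chi_{i,n}$ and the weight bounds.
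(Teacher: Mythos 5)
Your plan follows essentially the same route as the paper: Itô isometry plus the change of variables $s=\tfrac{i}{n}-\tfrac{r}{n}$ to obtain $\E\chi_{i,n}^2 = n^{-2H_{i/n}}\sigma_{i/n}^2\,\Gamma_{H_{i/n}}(0)$ up to error terms (Lemma \ref{lem:bias}), a local-polynomial bias bound in which the $\log(n)^{\lceil\eta\rceil}$ factor comes from differentiating $H\mapsto n^{-2H}$ in the Taylor expansion and using the vanishing-moment property of the weights, a variance bound of order $(nb_n)^{-1}$ from summable Gaussian correlations, and a final linearization of the log-ratio using the lower bounds on $\sigma^2\Gamma_H(0)$ and the truncation to $[0,1]$ (Proposition \ref{prop:NW} and the proof of Theorem \ref{thm:rate}). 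Two points of divergence are worth flagging. First, for the $L_p$ bound with $p>2$ you propose a Rosenthal/Burkholder inequality combined with blocking or the FCLT machinery; the paper instead notes that $\widehat{\phi}_n(u)$ is a quadratic functional of a Gaussian process, hence lies in the second Wiener chaos, so all $L_p$ norms are equivalent and the $L_2$ bound (obtained from $\Cov(X^2,Y^2)=2\Cov(X,Y)^2$ for jointly Gaussian $X,Y$, Lemma \ref{lem:cov-squared}) upgrades for free. Your route would require a moment inequality for dependent, non-martingale sums, which is considerably more delicate; the hypercontractivity argument is the device you are missing. Second, your attribution of the $\log(n)\,n^{-(1\wedge\eta)}$ term to a truncation of the kernel integral at distance $\log(n)/n$ is not quite how it arises: the paper truncates the rescaled integral at $n^{1-q}$ (i.e.\ at $n^{-q}$ in original time) and the $\log(n)$ factor again comes from the bound on $\partial_H g$, which picks up a $\log(n)+|\log v|$ term from the $H$-dependence of $n^{-2H}$. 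Neither point changes the overall architecture, which matches the paper's.
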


\begin{remark}
    Under the conditions described in Remark \ref{rem:uniform}, the bounds of Theorem \ref{thm:rate} and Proposition \ref{prop:NW} in the appendix can be extended to $u\in[b_n, 1]$, $u\in[0, 1-b_n]$, and $u\in [0,1]$, respectively.
\end{remark}

For the locally constant variant, i.e.\ $l=1$, this estimator is the same as the one considered by \cite{Coeurjolly2005} for the classical mBm.
Central to this approach is the scaling relation $\E(\widetilde{\chi}_{i,n}^2)/ \E(\chi_{i,n}^2) \approx 2^{2H_{i/n}}$, and the estimator $\widehat{H}(u)$ utilizes this by estimating the mean first, and then taking the ratio. 
An alternative approach pursued by \cite{shen2020} is to take the ratio first, and smooth second.
This leads to the local polynomial estimator
\begin{align*}
    \widehat{H}^\dagger_n(u) = \frac{1}{2}\sum_{i=1}^n w_{i,n}(u) \log_2\left(\frac{ \widetilde{\chi}_{i,n}^2}{\chi_{i,n}^2}\right).
\end{align*}
Again, we may constrain the estimator manually to the interval $[0,1]$. 
It turns out that this estimator leads to an improvement of the rate of convergence by a logarithmic factor. 
Compared to the results of \cite{shen2020} for the classical mBm, we show that this estimation approach, when applied to the Itô-mBm, also works for less smooth Hurst functions with Hölder exponent $\eta<1$.

\begin{theorem}\label{thm:rate2}
    Under the conditions of Theorem \ref{thm:rate}, the estimator $\widehat{H}^\dagger_n$ satisfies
    \begin{align*}
        \widehat{H}^\dagger_n(u) 
        &= H_u + \mathcal{O}\left( b_n^\eta \right) + \mathcal{O}_{L_2}\left( \frac{1}{\sqrt{n\, b_n}} \right) + \mathcal{O}\left( \log(n) n^{-(1\wedge \eta)} \right). 
    \end{align*} 
    The upper bound is minimized for $b_n \asymp n^{-\frac{1}{2\eta+1}}$, so that
    \begin{align*}
        \widehat{H}^\dagger_n(u) 
        &= H_u + \mathcal{O}_{L_2}\left( n^{-\frac{\eta}{2\eta+1}} \right).
    \end{align*}
\end{theorem}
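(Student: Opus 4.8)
The plan is the standard bias--variance decomposition for local polynomial regression, using the observation that the log-ratio cancels all scaling factors and leaves a regression function equal to $H_v$ itself. Write $Y_{i,n}=\tfrac12\log_2\!\bigl(\widetilde\chi_{i,n}^2/\chi_{i,n}^2\bigr)$, so that $\widehat H^\dagger_n(u)=\sum_i w_{i,n}(u)Y_{i,n}$ before the optional truncation to $[0,1]$, which can only reduce the error since $H_u\in(0,1)$. Since $\theta$ is deterministic, the Itô-mBm is a Gaussian process and each $(\chi_{i,n},\widetilde\chi_{i,n})$ is a centred bivariate Gaussian. Using $\sum_i w_{i,n}(u)=1$,
\begin{align*}
    \widehat H^\dagger_n(u)-H_u
    &=\sum_i w_{i,n}(u)\bigl(Y_{i,n}-\E Y_{i,n}\bigr)
    +\sum_i w_{i,n}(u)\bigl(\E Y_{i,n}-H_{i/n}\bigr)
    +\Bigl(\sum_i w_{i,n}(u)H_{i/n}-H_u\Bigr),
\end{align*}
and I will show the three terms have orders $\mathcal{O}_{L_2}((nb_n)^{-1/2})$, $\mathcal{O}(\log(n)n^{-(1\wedge\eta)})$, and $\mathcal{O}(b_n^\eta)$, so that summing and optimising over $b_n$ gives the claim.

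\emph{Bias terms.} For the approximation bias I use the exact identity $\E Y_{i,n}=\tfrac12\log_2\!\bigl(\E\widetilde\chi_{i,n}^2/\E\chi_{i,n}^2\bigr)$: writing $\chi_{i,n}^2/\E\chi_{i,n}^2=\zeta^2$ with $\zeta\sim\Normal(0,1)$, and likewise for $\widetilde\chi_{i,n}$, the universal constant $\E\log_2\zeta^2$ cancels between numerator and denominator, leaving only the ratio of variances. Hence $\E Y_{i,n}-H_{i/n}=\tfrac12\log_2\!\bigl(\E\widetilde\chi_{i,n}^2/(2^{2H_{i/n}}\E\chi_{i,n}^2)\bigr)$, which is $\mathcal{O}(\log(n)n^{-(1\wedge\eta)})$ by the same second-moment scaling estimate $\E\widetilde\chi_{i,n}^2/\E\chi_{i,n}^2=2^{2H_{i/n}}\bigl(1+\mathcal{O}(\log(n)n^{-(1\wedge\eta)})\bigr)$ that underlies Theorem~\ref{thm:rate}; summing against $\sum_i|w_{i,n}(u)|\le C$ preserves this order. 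For the smoothing bias, since $\theta$ is Hölder-$\eta$ with $\eta\le l+1$, I Taylor-expand $v\mapsto H_v$ around $u$ to order $\lfloor\eta\rfloor\le l$; the polynomial part is annihilated by the moment conditions (iv), and by (ii)--(iii) the remainder contributes at most $C\sum_i|w_{i,n}(u)|\,|\tfrac{i}{n}-u|^\eta\le C'b_n^\eta$. The crucial point -- and the reason $\widehat H^\dagger_n$ beats $\widehat H_n$ by a logarithmic factor -- is that here the smoothed quantity has regression function $v\mapsto H_v$, whose derivatives are controlled by the Hölder norm of $\theta$ and do \emph{not} grow with $n$; for $\widehat H_n$ one instead smooths $\E\chi_{i,n}^2\asymp\sigma_{i/n}^2 n^{-2H_{i/n}}$, whose $k$th derivative carries a factor $(\log n)^k$, which produces the extra $\log(n)^{\lceil\eta\rceil}$ in Theorem~\ref{thm:rate}.

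\emph{Stochastic term -- the main obstacle.} Since all $Y_{i,n}$ are functionals of the same Brownian motion, I bound
\begin{align*}
    \Var\Bigl(\textstyle\sum_i w_{i,n}(u)Y_{i,n}\Bigr)
    \le \sup_i|w_{i,n}(u)|\cdot\sum_i|w_{i,n}(u)|\cdot\sup_i\sum_j|\Cov(Y_{i,n},Y_{j,n})|,
\end{align*}
so that by (i)--(ii) it suffices to show the last factor is $\mathcal{O}(1)$. This rests on two facts. First, because $\chi_{i,n}$ and $\widetilde\chi_{i,n}$ are \emph{second}-order increments, the $(-s)_+$-terms in \eqref{eqn:def-Ito} cancel, and the Mandelbrot--van Ness kernel of $\chi_{i,n}$ decays like $|\tfrac{i}{n}-s|^{H_s-5/2}$ away from the window $[\tfrac{i-2}{n},\tfrac{i}{n}]$; integrating two such kernels against each other shows that the cross-correlation of the Gaussian increments decays polynomially at the summable rate $|i-j|^{2\overline H-4}$ (it is here that the second order is essential, first-order increments giving only the non-summable $|i-j|^{2\overline H-2}$). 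Second, up to the $\mathcal{O}(\log(n)n^{-(1\wedge\eta)})$ approximation error, $Y_{i,n}$ is a fixed even nonlinear function of the standardised Gaussian pair $(\chi_{i,n}/\mathrm{sd}(\chi_{i,n}),\widetilde\chi_{i,n}/\mathrm{sd}(\widetilde\chi_{i,n}))$ with finite moments of all orders -- the $\log$-singularity being absorbed by a small-ball estimate for the Gaussian increments -- so its Hermite expansion has no first-order component and $|\Cov(Y_{i,n},Y_{j,n})|$ is at most a constant times the square of the maximal cross-correlation of the four coordinates, hence also summable in $|i-j|$. This places $(Y_{i,n})$ in the regime of locally stationary arrays covered by the moment bounds underlying Proposition~\ref{prop:NW} (which build on the functional central limit theorem of \cite{mies_strong_2024}), and invoking them yields the bound $\mathcal{O}_{L_2}((nb_n)^{-1/2})$, uniformly over the parameter set and over $u\in[b_n,1-b_n]$; the restriction to $L_2$ -- rather than $L_p$ as in Theorem~\ref{thm:rate} -- simply reflects that higher moments of a weighted sum of $\log$-functionals are more delicate and not needed. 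Finally, collecting the three bounds and taking $b_n\asymp n^{-1/(2\eta+1)}$ balances $b_n^\eta$ against $(nb_n)^{-1/2}$ and makes both dominate $\log(n)n^{-(1\wedge\eta)}$ (since $\tfrac{\eta}{2\eta+1}<1\wedge\eta$ with a polynomial gap), giving the rate $n^{-\eta/(2\eta+1)}$.
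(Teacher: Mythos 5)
Your proposal is correct and follows essentially the same route as the paper: the bias is handled via the exact cancellation of $\E\log_2 Z^2$ in the log-ratio together with the second-moment scaling of Lemma \ref{lem:bias} and the weight moment conditions, and the variance via a Hermite-expansion bound on $\Cov(\log X^2,\log Y^2)$ in terms of the Gaussian correlation (the paper's Lemma \ref{lem:cov-log}, which only proves the linear bound $K|\rho|$ rather than your quadratic one, but either suffices) combined with the summable correlation decay of the second-order increments from Lemma \ref{lem:cov}. The only cosmetic inaccuracy is attributing the stochastic bound to machinery "underlying Proposition \ref{prop:NW}" and the FCLT of \cite{mies_strong_2024}; the computation you actually describe (weights times summable covariances) is the direct one the paper performs.
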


\begin{remark}
    The estimator of \cite{shen2020} achieves the rate $\mathcal{O}((n\log (n)^2)^{-\frac{\eta}{2\eta+1}})$. 
    The logarithmic advantage can be attributed to the fact that they suppose $\sigma$ to be constant in time.
    It may thus be estimated at a faster rate, and is essentially known for the local estimation of $H_u$, which allows for slightly better estimation of the latter. The same phenomenon of logarithmically faster rates if $\sigma$ is known has been shown for the stationary case of the fBm \citep{brouste2018local}. 
\end{remark}
\begin{remark}
    The slower rate of the estimator $\widehat{H}_u$ is due to a logarithmically larger bias of the estimator $\widehat{\phi}_n(u)$. 
    In particular, Lemma \ref{lem:bias} in the appendix shows that $\E \chi_{i,n}^2 \approx n^{2H_{i/n}} \sigma^2_{i/n} \Gamma_{H_{i/n}}(0)$, and since $H$ appears in the exponent, differentiating with respect to $H$ to control the bias yields additional terms of order $\log(n)$.
\end{remark}

\section{Integrated parameter estimation}\label{sec:integrated}

Another approach to treat the Hurst parameter nonparametrically is via the integrated parameter 
\begin{align*}
    \mathcal{H}(u) = \int_0^u H_v\, dv.
\end{align*}
Interest in this functional arises because many hypotheses about $H_u$ may be formulated in terms of $\mathcal{H}(u)$. 
For instance, the no-change hypothesis $H_v=H_0$ for all $v$ is equivalent to $\mathcal{H}(u)=u\mathcal{H}(1)$; monotonicity of $H_u$ is equivalent to convexity of $\mathcal{H}(u)$; and $H_v\geq \underline{H}$ is equivalent to $\mathcal{H}(u_2)-\mathcal{H}(u_1) \geq \underline{H}(u_2-u_1)$ for all $u_2\geq u_1$.
We will discuss these examples in detail below.

A naive estimator for the integrated Hurst parameter $\mathcal{H}(u)$ can be obtained by integrating the local estimator, i.e.\ $\widetilde{\mathcal{H}}(u) = \int_0^u \widehat{H}_n(v)\, dv$.
However, as discussed in \citep{mies2021}, this estimator will in general be asymptotically bias-driven.
Instead, we employ a generic linearization procedure which has been introduced in \citep{mies2021} in the context of locally-stationary time series.
To this end, we propose the estimator
\begin{align*}
    \widehat{\mathcal{H}}(u) = \frac{1}{n} \sum_{t=2L}^{\lfloor un\rfloor } \left\{ \widehat{H}_n(\tfrac{t-L}{n}) +  \frac{\widetilde{\chi}_{t,n}}{2\widehat{\phi}_{n,2}(\frac{t-L}{n})} -
        \frac{\chi_{t,n}}{2\widehat{\phi}_{n,1}(\frac{t-L}{n})}   \right\},
\end{align*}
where $\widehat{H}_n$ uses a kernel which is supported on $[-1,0]$. 
That is, $\widehat{H}_n(\frac{t}{n})$ only uses data up to time $\frac{t}{n}$.
\begin{remark}\label{rem:boundary}
    If $t\leq \lceil n b_n\rceil$, the one-sided estimators $\hat{\phi}_n(\frac{t}{n})$ and $\widehat{H}_n(\frac{t}{n})$ effectively use the bandwidth $\frac{t}{n} \leq b_n$.
    This is due to the boundary adaptation of the local polynomial regression.
\end{remark}

\begin{theorem}\label{thm:integrated-clt}
    Suppose that $v\mapsto \theta_v = (\sigma_v, H_v)$ is Hölder continuous with exponent $\eta\in(\frac{1}{2},1]$, and that $0<\underline{H} \leq H_v \leq \overline{H} < 1$ for all $v$, and $0<\underline{\sigma}^2\leq \sigma_v^2 \leq \overline{\sigma}^2<\infty$.
    Suppose furthermore that $b_n$, $L_n$ are chosen such that
    \begin{align*}
        n^{-\frac{1}{2}+r} \ll b_n &\ll n^{-\frac{1+2r}{4\eta}} \quad \text{ for some $r\in(0,\tfrac{1}{2})$},\\
        n^{\frac{1}{2} - r} \log(n)^2 \ll L_n &\ll n^{\frac{1}{2}-\delta} \quad \text{ for some $\delta\in(0,\tfrac{1}{2})$}.
    \end{align*}
    Then, as $n\to\infty$
    \begin{align*}
        \sqrt{n} (\widehat{\mathcal{H}}(u) - \mathcal{H}(u)) \quad\wconv\quad \int_0^u \tau(H_v)\, dW_v \overset{d}{=} W\left( \int_0^u \tau^2(H_v)\, dv \right),
    \end{align*}
    in the Skorokhod space $D[0,1]$. 
    The local asymptotic variance $\tau^2(H)$ is defined in \eqref{eqn:asymp-var}.
\end{theorem}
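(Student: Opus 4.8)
The plan is to show that the estimator $\widehat{\mathcal{H}}(u)$ is, up to negligible remainders, a Riemann-sum average of a \emph{linearized} statistic whose fluctuations are governed by the functional CLT for locally stationary time series from \cite{mies_strong_2024}. The key algebraic observation is that the added correction terms $\frac{\widetilde\chi_{t,n}}{2\widehat\phi_{n,2}} - \frac{\chi_{t,n}}{2\widehat\phi_{n,1}}$ are precisely the first-order (influence-function) correction that removes the bias inherent in the naive plug-in $\int_0^u\widehat H_n$. Concretely, writing $H_n(v) := \frac12\log_2(\phi_{n,2}(v)/\phi_{n,1}(v))$ for the population analogue using $\phi_{n,j}(v) = \E[\,\cdot\,]$ (from Lemma~\ref{lem:bias}, $\phi_{n,j}(v)\approx n^{2H_v}\sigma_v^2\Gamma_{H_v}(\cdot)$, so $H_n(v)\to H_v$), I would Taylor-expand $\frac12\log_2(\widetilde\chi_{t,n}^2/\chi_{t,n}^2)$ — no wait, the estimator uses $\widetilde\chi_{t,n}$ and $\chi_{t,n}$ linearly, not their squares — around the conditional means, so that each summand equals $\widehat H_n(\tfrac{t-L}{n})$ plus a martingale-type increment with mean (approximately) zero, plus a quadratic remainder.

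\textbf{Step 1 (linearization).} Decompose $\widehat{\mathcal H}(u) - \mathcal H(u)$ into: (a) a leading stochastic term $\frac1n\sum_t \xi_{t,n}$ where $\xi_{t,n}$ is the centered linearized summand; (b) a bias term from replacing $\widehat\phi_{n,j}$ by $\phi_{n,j}$ and $\widehat H_n$ by $H_n$, handled via Theorem~\ref{thm:rate} / Proposition~\ref{prop:NW}; (c) a discretization/Riemann-sum bias from replacing $\mathcal H(u) = \int_0^u H_v\,dv$ by $\frac1n\sum_t H_{(t-L)/n}$, which introduces the shift-by-$L$ and is $O(L/n) + O(b_n^\eta)$; and (d) higher-order remainder terms, quadratic in the fluctuations, of order $O_{L_1}(b_n^{-1}n^{-1})$ or similar. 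The bandwidth and lag constraints $n^{-1/2+r}\ll b_n \ll n^{-(1+2r)/(4\eta)}$ and $n^{1/2-r}\log(n)^2 \ll L_n \ll n^{1/2-\delta}$ are exactly what makes (b), (c), (d) all $o(n^{-1/2})$: the lower bound on $b_n$ controls the variance blow-up in the remainders, the upper bound on $b_n$ kills the $b_n^\eta$ smoothing bias faster than $n^{-1/2}$ (using $\eta>1/2$), the lower bound on $L_n$ ensures $\widehat H_n(\tfrac{t-L}{n})$ is independent enough of $\chi_{t,n},\widetilde\chi_{t,n}$ (decoupling), and the upper bound on $L_n$ keeps the shift bias $O(L/n) = o(n^{-1/2})$.

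\textbf{Step 2 (FCLT for the leading term).} Identify $\xi_{t,n}$ as a functional of the locally stationary array $(\chi_{t,n}, \widetilde\chi_{t,n})$, which — after rescaling by $n^{H_{t/n}}$ — is approximately a stationary Gaussian functional of the fBm increments with parameter $H_{t/n}$, with the approximation error controlled by Lemma~\ref{lem:bias} and the Hölder continuity of $\theta_v$. Verify the hypotheses of the functional CLT of \cite{mies_strong_2024}: a near-epoch-dependence / physical-dependence bound on $\xi_{t,n}$ (inherited from the fast-decaying dependence of second-order fBm increments, uniformly in $H\in[\underline H,\overline H]$), plus moment bounds, plus smoothness in the locally-stationary parameter. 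This yields $\frac{1}{\sqrt n}\sum_{t=1}^{\lfloor un\rfloor}\xi_{t,n} \wconv \int_0^u \tau(H_v)\,dW_v$ in $D[0,1]$, with $\tau^2(H)$ the long-run variance of the stationary linearized functional, matching \eqref{eqn:asymp-var}. The final distributional identity $\int_0^u \tau(H_v)\,dW_v \overset{d}{=} W(\int_0^u\tau^2(H_v)\,dv)$ is the standard time-change for Gaussian martingales.

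\textbf{Main obstacle.} The crux is Step~1(d) together with the decoupling in Step~1(b): one must show the quadratic remainder from the Taylor expansion of the log-ratio, integrated over $t$ and combined with the estimation error of $\widehat\phi_n$ and $\widehat H_n$ inside each summand, is genuinely $o_{L_1}(n^{-1/2})$ \emph{uniformly in $u$}. This requires a delicate interplay between the $\sqrt{nb_n}$-consistency of $\widehat\phi_n$ (which is slower than $\sqrt n$, so the naive bound is not enough) and the averaging over $n$ summands, exploiting that the leading error of $\widehat H_n(\tfrac{t-L}{n})$ is (approximately) \emph{mean-zero} and only weakly correlated across $t$ once $L_n\gg b_n n$ in the appropriate sense — this is precisely the linearization idea of \cite{mies2021}, and making it rigorous in the present continuous-time-process setting, bridging the discrete-time FCLT with the continuous-time increments, is where the technical proof technique ``of independent interest'' alluded to in the introduction is needed. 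A secondary difficulty is establishing all bounds uniformly over $\theta\in[\underline\sigma^2,\overline\sigma^2]\times[\underline H,\overline H]$ and over $u\in[0,1]$, which forces the use of the boundary-adapted weights (Remark~\ref{rem:boundary}) and uniform-in-$H$ control of the $\Gamma_H$ quantities from Lemma~\ref{lem:bias}.
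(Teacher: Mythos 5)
Your plan follows essentially the same route as the paper's proof: the estimator is recognized as a first-order (influence-function) linearization $m(\hat{\psi}_{t,n}) + Dm(\hat{\psi}_{t,n})\cdot(Z_{t,n}-\hat{\psi}_{t,n})$ of the log-ratio functional, the Taylor remainder is quadratic in the local estimation error and is absorbed by the bandwidth conditions, and the leading term is handled by the multiplier FCLT of \cite{mies_strong_2024} (Theorem \ref{thm:multiplier-FCLT}) after verifying physical-dependence conditions for the Gaussian increment array via a Bernoulli-shift representation of the driving Brownian motion; the closing time-change identity is standard.

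Two mechanical points in your sketch are off and would derail an execution of the plan. First, the decoupling is \emph{not} obtained by requiring $L_n \gg n b_n$: under the stated conditions $n b_n \gg n^{\frac{1}{2}+r}$ while $L_n \ll n^{\frac{1}{2}-\delta}$, so that inequality is impossible. Rather, $\widehat{H}_n$ uses a kernel supported on $[-1,0]$, so $\widehat{\phi}_n(\frac{t-L}{n})$ depends only on data up to time $\frac{t-L}{n}$ and is automatically measurable with respect to the $L$-lagged past; the lower bound on $L_n$ then comes from the multiplier-FCLT condition $\Lambda_n L_n^{1-\beta}\to 0$ with $\beta=\frac{3}{2}$, i.e.\ $L_n \gg \Lambda_n^2 \asymp \log(n)^2 n^{\frac{1}{2}-r}$. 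Relatedly, no mean-zero or weak-correlation argument for the error of $\widehat{H}_n$ is needed: that error enters only quadratically, and the crude bound $\frac{1}{n}\sum_t\|\hat{\psi}_{t,n}-\psi_{t,n}\|_{L_2}^2 = \mathcal{O}(\log(n)^{2\lceil\eta\rceil} n^{-\frac{1}{2}-r}) = o(n^{-\frac{1}{2}})$ suffices --- which is why the upper bound on $b_n$ enforces $b_n^{2\eta}\ll n^{-\frac{1}{2}-r}$ (the \emph{squared} smoothing bias must beat $n^{-1/2}$), not $b_n^{\eta}\ll n^{-1/2}$ as you state. Second, the correction terms must involve the squared increments $\chi_{t,n}^2/(2\widehat{\phi}_{n,1})$ and $\widetilde{\chi}_{t,n}^2/(2\widehat{\phi}_{n,2})$ for the identity $\widehat{\mathcal{H}}(u)=\frac{1}{n}\sum_t\{m(\hat{\psi}_{t,n})+Dm(\hat{\psi}_{t,n})\cdot(Z_{t,n}-\hat{\psi}_{t,n})\}$ to hold (the display in Section \ref{sec:integrated} omits the squares); you flagged this discrepancy but left it unresolved, and the linearization does not close without it.
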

\begin{remark}
    The condition on $L_n$ is rather weak, and any choice $L_n \asymp n^{a}$ for some $a\in(\frac{1}{6}, \frac{1}{2})$ will satisfy the conditions. 
    The choice of $b_n$ basically needs to ensure that the local estimator $\widehat{\phi}_n(u)$ and hence $\widehat{H}_n(u)$, admits a sufficiently fast rate of convergence of order $\mathcal{O}(n^{-\frac{1}{4} - \frac{r}{2}})$. 
    The feasible range of choices for $b_n$ allows for this rate to be driven by bias or variance, and may always be satisfied for $\eta>\frac{1}{2}$.
\end{remark}

We denote the limiting variance process by $\Sigma(u) = \int_0^u \tau^2(H_v)\, dv$. 
To perform statistical inference, we may estimate this process via the plug-in method as $\widehat{\Sigma}(u)=\frac{1}{n}\sum_{t=2L_n}^{\lfloor un\rfloor} \tau^2(\widehat{H}_n(\tfrac{t}{n}))$.

\begin{theorem}\label{thm:studentizing}
    Under the conditions of Theorem \ref{thm:integrated-clt}, $\sup_{u\in[0,1]} \left|\Sigma(u)-\widehat{\Sigma}(u)\right| \pconv 0$.
\end{theorem}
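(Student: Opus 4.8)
The plan is to decompose $\widehat\Sigma(u)-\Sigma(u)$ into a deterministic Riemann–sum error and a stochastic estimation error, and to bound each uniformly in $u$. Writing $g(v)=\tau^2(H_v)$,
\[
\sup_{u\in[0,1]}\bigl|\widehat\Sigma(u)-\Sigma(u)\bigr|
\;\le\;\underbrace{\frac1n\sum_{t=2L_n}^{n}\bigl|\tau^2(\widehat H_n(\tfrac tn))-g(\tfrac tn)\bigr|}_{=:A_n}
\;+\;\underbrace{\sup_{u\in[0,1]}\Bigl|\frac1n\sum_{t=2L_n}^{\lfloor un\rfloor}g(\tfrac tn)-\int_0^u g(v)\,dv\Bigr|}_{=:B_n},
\]
so it is enough to show $B_n\to0$ and $A_n\pconv0$.

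For $B_n$: the map $H\mapsto\tau^2(H)$ is continuous on $[0,1]$ (it has to be for $\widehat\Sigma$ to be well-defined, since $\widehat H_n$ is constrained to $[0,1]$) and, from its explicit form \eqref{eqn:asymp-var}, locally Lipschitz on $(0,1)$; since $v\mapsto H_v$ is Hölder-$\eta$ with values in the compact set $[\underline H,\overline H]\subset(0,1)$, the composition $g$ is bounded and Hölder-$\eta$ on $[0,1]$. The initial piece $\int_0^{2L_n/n}g$ is $O(L_n/n)=o(1)$ uniformly in $u$ because $L_n\ll n^{1/2}$, and a standard Riemann–sum bound controls the remaining discrepancy by $O(n^{-\eta})+O(n^{-1})$, again uniformly in $u$. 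Hence $B_n\to0$.

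For $A_n$: since $A_n\ge0$, Markov's inequality reduces the claim to $\E A_n=\frac1n\sum_{t=2L_n}^{n}\E|\tau^2(\widehat H_n(\tfrac tn))-g(\tfrac tn)|\to0$, for which it suffices to bound the individual summands by $o(1)$ uniformly over $2L_n\le t\le n$. The essential ingredient is a uniform rate for the local estimator: because $\widehat H_n$ uses a kernel supported on $[-1,0]$ and local polynomial regression adapts at the boundary (Remark \ref{rem:boundary}), Theorem \ref{thm:rate} applies at $u=t/n$ with effective bandwidth $b_n$ when $t>\lceil nb_n\rceil$ and with effective bandwidth $t/n\in[2L_n/n,b_n]$ when $2L_n\le t\le\lceil nb_n\rceil$; in either case one gets, for any $p\ge2$ and uniformly in $2L_n\le t\le n$,
\[
\E\bigl|\widehat H_n(\tfrac tn)-H_{t/n}\bigr|^{p}
=\mathcal O\Bigl(\log(n)^{p\lceil\eta\rceil}b_n^{p\eta}+(nb_n)^{-p/2}+(2L_n)^{-p/2}+\log(n)^{p}n^{-p\eta}\Bigr)=o(1),
\]
where the term $(2L_n)^{-p/2}$ stems from the boundary regime, in which the variance is $(n\cdot t/n)^{-1/2}=t^{-1/2}\le(2L_n)^{-1/2}$, and every term vanishes under the conditions of Theorem \ref{thm:integrated-clt} (using $\eta>\tfrac12$, $b_n\ll n^{-(1+2r)/(4\eta)}$ and $L_n\gg n^{1/2-r}\log(n)^2$). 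A standard modulus-of-continuity argument — using that $\tau^2$ is continuous, hence bounded and uniformly continuous, on the compact interval $[0,1]$ (where $\widehat H_n$ takes its values by construction), together with Markov's inequality — then upgrades this to $\E|\tau^2(\widehat H_n(\tfrac tn))-g(\tfrac tn)|=o(1)$ uniformly in $t$. Hence $\E A_n\to0$ and $A_n\pconv0$.

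Combining the two bounds yields $\sup_{u\in[0,1]}|\widehat\Sigma(u)-\Sigma(u)|\pconv0$. I expect the main obstacle to be the uniform-in-$t$ control of the local estimator down to $t=2L_n$: this forces one to carry the boundary-adapted local polynomial regression through the regime $t\lesssim nb_n$, where the effective bandwidth degrades to $t/n$, and to verify that the enlarged bias and variance contributions remain negligible under the bandwidth and window-length conditions of Theorem \ref{thm:integrated-clt}; the remaining steps (Markov's inequality, continuity of $\tau^2$, the Riemann–sum estimate) are routine.
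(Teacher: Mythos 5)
Your proposal is correct and follows the same basic route as the paper: split the error into the stochastic estimation part and a deterministic Riemann-sum part, control the former via the rates of Theorem \ref{thm:rate} carried through the boundary regime $t\lesssim nb_n$ (Remark \ref{rem:boundary}), and the latter via continuity of $v\mapsto\tau^2(H_v)$. The one technical difference is in how the stochastic part is handled: the paper first establishes $\max_{t=2L_n,\ldots,n}|\widehat{H}_n(\tfrac{t}{n})-H_{t/n}|\pconv 0$ (using the high-moment bounds already derived in the proof of Theorem \ref{thm:integrated-clt}) and then applies continuity of $\tau^2$ termwise, whereas you bound the supremum by the full $L_1$-average $A_n$ and use Markov, which only needs the per-$t$ moment bounds and is in that sense slightly more elementary. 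The price you pay is that your modulus-of-continuity step requires $\tau^2$ to be bounded and uniformly continuous on all of $[0,1]$ (to absorb the event that $\widehat{H}_n(\tfrac{t}{n})$ is far from $H_{t/n}$), and your justification of this --- "it has to be for $\widehat\Sigma$ to be well-defined" --- is circular; in fact $\Gamma_H(0)=8-2\cdot 4^H\to 0$ as $H\to 1$, so $\tau^2(H)$ in \eqref{eqn:asymp-var} is a $0/0$ expression at $H=1$ whose finiteness needs a (short) separate check. The paper's max-convergence route sidesteps this, since with probability tending to one all $\widehat{H}_n(\tfrac{t}{n})$ then lie in a compact subinterval of $(0,1)$ where $\tau^2$ is manifestly locally Lipschitz. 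This is a repairable blemish rather than a genuine gap; everything else, including your identification of the boundary-bandwidth issue as the main obstacle, matches the paper's argument.
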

That is, the limiting process $W(\Sigma(u))$ of Theorem \ref{thm:integrated-clt} is approximated by $W(\widehat{\Sigma}(u))$, which allows for feasible statistical inference.
In the next two subsections, we proceed to describe two specific hypothesis tests based on the estimator for the integrated Hurst parameter.

\subsection{Testing for constant Hurst exponent}

The estimator for the integrated Hurst exponent can be used to test for a constancy, that is, to treat the statistical problem
\begin{align*}
    \mathbb{H}_0: H_v \text{ constant} \quad\leftrightarrow\quad \mathbb{H}_1: H_v\text{ not constant}.
\end{align*}
Rejecting $\mathbb{H}_0$ is interpreted as evidence that a model based on fBm is not sufficient, and multifractional extensions need to be considered.
This problem has been studied by \cite{Bibinger2019}. 
Therein, the specific multifractional model, i.e.\ Itô-mBm \eqref{eqn:def-Ito} vs classical mBm \eqref{eqn:def-classical}, does not matter, because the process is stationary under the null. 
\cite{Bibinger2019} employs a CUSUM statistic based on the squared increments $\chi_{t,n}^2$. The test is applied to sunspot data, finding evidence for nonstationarity.
A methodological limitation of the referenced method is that it will detect both changes in $\sigma_v$ and in $H_v$, and a post-hoc analysis is necessary to distinguish both types of changes. 
In contrast, we would like to design a test which is only sensitive to changes in $H_v$, while being robust against changes in $\sigma_v$.
That is, we treat the volatility as a nuisance. 
The relevance of allowing for nonstationary nuisance quantities under the no-change null hypothesis was first recognized by \cite{Zhou2013}, and further developed by \citet{Dette2018,Gorecki2018,Pesta2018,Demetrescu2018,schmidt_asymptotic_2021,cui_estimation_2021}. 
To the best of our knowledge, changepoint testing with nonstationary nuisance quantities has not yet been considered for continuous-time processes.

To test for a change, we suggest the CUSUM-type statistic
\begin{align*}
    T_{\mathrm{CUSUM}}(\widehat{\mathcal{H}}) = \sup_{u\in[0,1]} \left| \widehat{\mathcal{H}}(u)-u\widehat{\mathcal{H}}(1) \right|.
\end{align*}
Under $\mathbb{H}_0$, and if the volatility function satisfies the conditions of Theorem \ref{thm:integrated-clt}, the statistic $\sqrt{n} T(\widehat{\mathcal{H}})$ will converge in distribution to $\sup_{u\in[0,1]} |W(\Sigma(u))-uW(\Sigma(1))|$.
This can be used to derive critical values.

\begin{proposition}\label{prop:CUSUM}
    Suppose that the conditions of Theorem \ref{thm:integrated-clt} hold.
    Let $q_n(\alpha)$ be the $1-\alpha$, $X$-conditional quantile of the random variable $Y_n=\sup_{u\in[0,1]} \left| W\left(\widehat{\Sigma}(u)\right) - u W(\widehat{\Sigma}(1))\right|$.
    If $H_v$ is constant, then 
    \begin{align*}
        \limsup_{n\to\infty} P\left(\sqrt{n}\,T_{\mathrm{CUSUM}}(\widehat{\mathcal{H}})> q_n(\alpha)\right) \leq \alpha.
    \end{align*}
    Alternatively, if $H_v$ is not constant, then
    \begin{align*}
        \lim_{n\to\infty} P\left(\sqrt{n}\,T_{\mathrm{CUSUM}}(\widehat{\mathcal{H}})> q_n(\alpha)\right) = 1.
    \end{align*}
\end{proposition}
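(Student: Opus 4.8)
\textbf{Proof plan for Proposition \ref{prop:CUSUM}.}

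The plan is to derive the two claims from the functional limit theorem (Theorem \ref{thm:integrated-clt}), the consistency of the variance estimator (Theorem \ref{thm:studentizing}), and a continuous mapping argument, treating the null and alternative separately.

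\emph{Size under $\mathbb{H}_0$.} When $H_v \equiv H$ is constant, Theorem \ref{thm:integrated-clt} gives $\sqrt{n}(\widehat{\mathcal{H}}(u)-\mathcal{H}(u)) \wconv W(\Sigma(u))$ in $D[0,1]$, with $\Sigma(u) = u\,\tau^2(H)$ and $\mathcal{H}(u)=uH = u\mathcal{H}(1)$. Hence $\sqrt{n}\,T_{\mathrm{CUSUM}}(\widehat{\mathcal{H}}) = \sup_u |\sqrt{n}(\widehat{\mathcal{H}}(u)-\mathcal{H}(u)) - u\sqrt{n}(\widehat{\mathcal{H}}(1)-\mathcal{H}(1))|$, and by the continuous mapping theorem applied to the map $f\mapsto \sup_u|f(u)-uf(1)|$ on $D[0,1]$ (which is continuous in the Skorokhod, and indeed sup, topology), this converges in distribution to $T := \sup_u |W(\Sigma(u)) - uW(\Sigma(1))|$. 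Separately, the quantiles $q_n(\alpha)$ are $X$-conditional $(1-\alpha)$-quantiles of $Y_n = \sup_u |W(\widehat\Sigma(u)) - uW(\widehat\Sigma(1))|$, where $W$ is an independent Brownian motion; since $\sup_{u}|\widehat\Sigma(u)-\Sigma(u)|\pconv 0$ by Theorem \ref{thm:studentizing} and $\Sigma(u)=u\tau^2(H)$ is continuous and strictly increasing (so $T$ has a continuous distribution function), a standard argument shows $Y_n \wconv T$ and, crucially, $q_n(\alpha) \pconv q(\alpha)$, the (deterministic) $(1-\alpha)$-quantile of $T$. Combining the two convergences via Slutsky and the portmanteau lemma at the continuity point $q(\alpha)$ yields $P(\sqrt{n}\,T_{\mathrm{CUSUM}}(\widehat{\mathcal{H}}) > q_n(\alpha)) \to P(T > q(\alpha)) = \alpha$, which gives the claimed $\limsup \le \alpha$ (in fact equality).

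\emph{Power under $\mathbb{H}_1$.} If $H_v$ is not constant, then $\mathcal{H}$ is not linear: there exists $u_0\in(0,1)$ with $\Delta := |\mathcal{H}(u_0) - u_0\mathcal{H}(1)| > 0$, since $\mathcal{H}(u)-u\mathcal{H}(1)$ vanishes at $0$ and $1$ but, being the integral of a non-constant continuous function, cannot be identically zero. By Theorem \ref{thm:integrated-clt}, $\widehat{\mathcal{H}}(u) \pconv \mathcal{H}(u)$ uniformly (the $\sqrt n$-rescaled error converges weakly, hence $\widehat{\mathcal{H}}-\mathcal{H} = O_P(n^{-1/2})$ uniformly in $D[0,1]$), so $T_{\mathrm{CUSUM}}(\widehat{\mathcal{H}}) \pconv \sup_u|\mathcal{H}(u)-u\mathcal{H}(1)| \ge \Delta > 0$, whence $\sqrt{n}\,T_{\mathrm{CUSUM}}(\widehat{\mathcal{H}}) \pconv \infty$. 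On the other hand, $q_n(\alpha)$ must be shown to stay bounded in probability: $\widehat\Sigma$ is built from $\tau^2(\widehat{H}_n(\cdot))$ with $\tau^2$ bounded on $[\underline H,\overline H]$ (as $\widehat H_n$ is truncated to $[0,1]$ and $\tau^2$ is continuous, hence bounded, there), so $\widehat\Sigma(1) \le C$ deterministically, and $Y_n$ is stochastically dominated by $\sup_u|W(v)|$ over $v\in[0,C]$, giving $q_n(\alpha) \le q^*(\alpha) < \infty$ for an $n$-free constant. Therefore $P(\sqrt n\, T_{\mathrm{CUSUM}}(\widehat{\mathcal{H}}) > q_n(\alpha)) \ge P(\sqrt n\, T_{\mathrm{CUSUM}}(\widehat{\mathcal{H}}) > q^*(\alpha)) \to 1$.

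\emph{Main obstacle.} The delicate point is the joint handling of the data-dependent critical value $q_n(\alpha)$ and the statistic: one must argue that conditionally on $X$ (hence on $\widehat\Sigma$), $Y_n$ behaves like $\sup_u|W(\Sigma(u))-uW(\Sigma(1))|$ uniformly enough that $q_n(\alpha)\pconv q(\alpha)$, and that this convergence of quantiles combines correctly with the \emph{unconditional} weak convergence of $\sqrt n\,T_{\mathrm{CUSUM}}(\widehat{\mathcal{H}})$. This is handled by a subsequence argument: along any subsequence, pass to a further subsequence on which $\widehat\Sigma \to \Sigma$ almost surely (using Theorem \ref{thm:studentizing}); on that event the conditional law of $Y_n$ converges weakly to the law of $T$ by continuity of $v\mapsto W(v)$ and the assumed continuity of $\Sigma$, and since $T$ has a continuous and strictly increasing distribution function near $q(\alpha)$ (strict positivity of $\tau^2$ ensures $\Sigma$ is strictly increasing, so $T>0$ a.s. and its law has no atoms), the conditional quantiles converge to $q(\alpha)$; a final appeal to the continuous mapping theorem plus Slutsky closes the argument.
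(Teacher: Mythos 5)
Your proof follows essentially the same route as the paper's (which is much terser): under the null, combine the FCLT of Theorem \ref{thm:integrated-clt} with the quantile consistency implied by Theorem \ref{thm:studentizing}; under the alternative, use non-linearity of $\mathcal{H}$ together with the $\sqrt{n}$-rate to show the statistic diverges while $q_n(\alpha)$ stays bounded. Your additional detail on the continuous mapping step, the subsequence argument for $q_n(\alpha)\pconv q(\alpha)$, and the stochastic domination bound for $Y_n$ under the alternative are all correct elaborations of what the paper leaves implicit.
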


\subsection{Application to goodness-of-fit testing}

Let $\mathcal{G}_0\subset \{ H:[0,1]\to (0,1) \}$ be a class of candidate functions for the Hurst parameter, and we want to test the null hypothesis 
\begin{align*}
    \mathbb{H}_0: H\in \mathcal{G}_0 \quad \leftrightarrow\quad \mathbb{H}_1: H\notin\mathcal{G}_0.
\end{align*}
For example, setting $\mathcal{G}_0=\{H_v = av+b \,|\, b\in(0,1), a+b\in(0,1) \}$ yields a test for linearity, and setting $\mathcal{G}_0 = \{ H\,|\, \exists v_0 \text{ s.t.\ $H$ is increasing on $[0,v]$ and decreasing on $[v,1]$} \}$ tests for unimodality.
We suggest to apply the test statistic
\begin{align*}
    \widehat{T}(\mathcal{G}_0) = \inf_{\tilde{H}\in \mathcal{G}_0} \widehat{T}(\tilde{H}), \quad\text{where }
    \widehat{T}(\tilde{H})=\sup_{u\in [0,1]} \left| \widehat{\mathcal{H}}(u) - \int_0^u \tilde{H}(v)\, dv \right|.
\end{align*}
Under $\mathbb{H}_0$, we clearly have $\sqrt{n}\widehat{T}(\mathcal{G}_0)\leq \sqrt{n}\widehat{T}(H) \wconv \sup_{u\in[0,1]} \left|\int_0^u \tau(H_v)\, dW_v \right|$, and we can use the quantiles of the latter as critical values.
Importantly, we can estimate the asymptotic variance function as in Theorem \ref{thm:studentizing}, which is also consistent under the alternative.
As a result, we obtain a consistent test.

\begin{proposition}\label{prop:GOF}
    Suppose that the conditions of Theorem \ref{thm:integrated-clt} hold.
    Let $q_n(\alpha)$ be the $1-\alpha$, $X$-conditional quantile of the random variable $Z_n=\sup_{u\in[0,1]} | W(\widehat{\Sigma}(u) )|$.
    If $H\in \mathcal{G}_0$, then 
    \begin{align*}
        \limsup_{n\to\infty} P\left(\sqrt{n}\,\widehat{T}(\mathcal{G}_0)> q_n(\alpha)\right) \leq \alpha.
    \end{align*}
    Alternatively, if $H\notin \mathcal{G}_0$ and $\mathcal{G}_0$ is closed with respect to the uniform norm, then
    \begin{align*}
        \lim_{n\to\infty} P\left(\sqrt{n}\,\widehat{T}(\mathcal{G}_0)> q_n(\alpha)\right) = 1.
    \end{align*}
\end{proposition}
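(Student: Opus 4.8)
The plan is to mirror the structure of the CUSUM argument (Proposition \ref{prop:CUSUM}), adapting it to the $\inf$ over $\mathcal{G}_0$. For the level statement, first observe that for any fixed $\tilde H\in\mathcal{G}_0$ we have the pointwise bound $\widehat{T}(\mathcal{G}_0)\le \widehat{T}(\tilde H)$. Under $\mathbb{H}_0$ the true Hurst function $H$ lies in $\mathcal{G}_0$, so $\sqrt{n}\,\widehat{T}(\mathcal{G}_0)\le \sqrt{n}\,\widehat{T}(H)=\sqrt{n}\sup_{u}|\widehat{\mathcal{H}}(u)-\mathcal{H}(u)|$, which by Theorem \ref{thm:integrated-clt} (together with the continuous-mapping theorem applied to the sup functional on $D[0,1]$) converges in distribution to $Z=\sup_{u\in[0,1]}|W(\Sigma(u))|$. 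The critical value $q_n(\alpha)$ is the conditional $(1-\alpha)$-quantile of $Z_n=\sup_u|W(\widehat\Sigma(u))|$; by Theorem \ref{thm:studentizing}, $\sup_u|\widehat\Sigma(u)-\Sigma(u)|\pconv 0$, and since $\Sigma$ is a deterministic continuous nondecreasing function, a standard argument (e.g. via the Skorokhod representation, using continuity of $u\mapsto W(u)$ and monotonicity to transfer uniform convergence of the time-change into uniform convergence of $W\circ\widehat\Sigma$) gives $Z_n\wconv Z$ and hence $q_n(\alpha)\to q(\alpha)$, the $(1-\alpha)$-quantile of $Z$, in probability. Combining the distributional convergence of the test statistic with the convergence of the (random) critical value, and using that the limit $Z$ has a continuous distribution function at $q(\alpha)$ (this needs a brief justification — $Z$ is the sup of the absolute value of a non-degenerate time-changed Brownian motion, whose law is atomless on $(0,\infty)$ provided $\Sigma(1)>0$, which holds since $\tau^2>0$), one concludes $\limsup_n P(\sqrt n\,\widehat T(\mathcal{G}_0)>q_n(\alpha))\le\alpha$.

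For the consistency (power) statement, suppose $H\notin\mathcal{G}_0$ and $\mathcal{G}_0$ is closed in the uniform norm. The key point is to show that the population version $T(\mathcal{G}_0):=\inf_{\tilde H\in\mathcal{G}_0}\sup_u|\mathcal{H}(u)-\int_0^u\tilde H(v)\,dv|$ is strictly positive. Indeed, the map $H\mapsto \mathcal{H}(\cdot)=\int_0^\cdot H(v)\,dv$ is a linear isometric-type embedding of $C[0,1]$ (with sup norm on the antiderivative) — more precisely, $\sup_u|\int_0^u(H-\tilde H)|$ defines a norm on functions modulo nothing, and $\sup_u|\int_0^u(H-\tilde H)|=0$ forces $H=\tilde H$ a.e. Since $\mathcal{G}_0$ is closed under uniform convergence and $H\notin\mathcal{G}_0$, an infimizing sequence $\tilde H_k$ with $\sup_u|\int_0^u(H-\tilde H_k)|\to T(\mathcal{G}_0)$ must stay bounded, and if $T(\mathcal{G}_0)=0$ then $\int_0^u\tilde H_k(v)\,dv\to\int_0^u H(v)\,dv$ uniformly; one then argues (using equicontinuity of the antiderivatives and closedness of $\mathcal{G}_0$, or passing to the convex situation) that this forces $H$ to be a uniform limit of elements of $\mathcal{G}_0$, hence $H\in\mathcal{G}_0$, a contradiction. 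Therefore $c_0:=T(\mathcal{G}_0)>0$.

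It remains to transfer this to the estimator. We have $\widehat T(\mathcal{G}_0)=\inf_{\tilde H\in\mathcal{G}_0}\sup_u|\widehat{\mathcal H}(u)-\int_0^u\tilde H(v)\,dv|\ge \inf_{\tilde H}\sup_u|\mathcal H(u)-\int_0^u\tilde H(v)\,dv| - \sup_u|\widehat{\mathcal H}(u)-\mathcal H(u)| = c_0 - \sup_u|\widehat{\mathcal H}(u)-\mathcal H(u)|$, using the reverse triangle inequality uniformly in $\tilde H$. By Theorem \ref{thm:integrated-clt}, $\sup_u|\widehat{\mathcal H}(u)-\mathcal H(u)|=\mathcal{O}_P(n^{-1/2})\pconv 0$, so $\widehat T(\mathcal{G}_0)\ge c_0/2$ with probability tending to one, giving $\sqrt n\,\widehat T(\mathcal{G}_0)\to\infty$ in probability. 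Meanwhile $q_n(\alpha)=\mathcal{O}_P(1)$: the variance estimator $\widehat\Sigma$ remains uniformly bounded in probability even under the alternative (since $\tau^2$ is bounded on $[\underline H,\overline H]$ and $\widehat H_n$ is clipped to $[0,1]$), so $Z_n$ is stochastically bounded and so is its conditional quantile. Hence $P(\sqrt n\,\widehat T(\mathcal{G}_0)>q_n(\alpha))\to1$.

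The main obstacle I expect is the population identifiability/separation argument: establishing $T(\mathcal{G}_0)>0$ cleanly from only ``closed in uniform norm'' requires care, because closedness of $\mathcal{G}_0$ in sup norm does not immediately imply closedness of its image $\{\int_0^\cdot\tilde H\,:\,\tilde H\in\mathcal{G}_0\}$ in the relevant topology — one needs an equicontinuity/compactness argument (the antiderivatives of functions in $\mathcal{G}_0$ are uniformly Lipschitz with constant $1$, hence precompact in $C[0,1]$ by Arzelà–Ascoli), after which the infimum is attained and the isometry of $H\mapsto\mathcal{H}$ on $L^1$-equivalence classes finishes the argument. The rest is routine, reusing Theorems \ref{thm:integrated-clt} and \ref{thm:studentizing} exactly as in Proposition \ref{prop:CUSUM}.
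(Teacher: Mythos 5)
Your proposal follows the same route as the paper's own proof: for the level, bound $\sqrt{n}\,\widehat{T}(\mathcal{G}_0)\le\sqrt{n}\,\widehat{T}(H)\wconv\sup_u|W(\Sigma(u))|$ and combine this with $q_n(\alpha)\pconv q(\alpha)$ via Theorem \ref{thm:studentizing}; for the power, reduce everything to a positive separation constant $c_0=\inf_{\tilde H\in\mathcal{G}_0}\sup_u|\mathcal{H}(u)-\int_0^u\tilde H(v)\,dv|>0$, then use $\|\widehat{\mathcal{H}}-\mathcal{H}\|_\infty=\mathcal{O}_P(n^{-1/2})$ and stochastic boundedness of $q_n(\alpha)$. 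The level argument and the reduction of the power argument to $c_0>0$ are correct and match the paper.

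The step you yourself flag as the main obstacle is, however, a genuine gap, and your proposed repair does not close it. Arzel\`a--Ascoli applied to the uniformly $1$-Lipschitz antiderivatives gives a uniform limit point of an infimizing sequence $\int_0^\cdot\tilde H_k$ in $C[0,1]$, but nothing forces that limit to be the antiderivative of an element of $\mathcal{G}_0$: uniform convergence of antiderivatives does not imply any convergence of the $\tilde H_k$ themselves, so closedness of $\mathcal{G}_0$ in the uniform norm cannot be invoked. Concretely, take $\mathcal{G}_0=\{v\mapsto\tfrac12+\tfrac14\sin(2\pi kv):k\in\N\}$ and true $H\equiv\tfrac12$. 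This $\mathcal{G}_0$ is closed in the uniform norm (its elements admit no Cauchy subsequence, since they are pairwise separated in $L_2$), and $H\notin\mathcal{G}_0$; yet $\sup_u|\int_0^u\tfrac14\sin(2\pi kv)\,dv|\le\tfrac{1}{4\pi k}\to0$, so $c_0=0$ and $\sqrt{n}\,\widehat{T}(\mathcal{G}_0)\le\sqrt{n}\,\|\widehat{\mathcal{H}}-\mathcal{H}\|_\infty+\sqrt{n}\inf_k\tfrac{1}{4\pi k}=\mathcal{O}_P(1)$, so the power does not tend to one. The paper's own proof has exactly the same issue: it asserts without argument that $\widetilde{\mathcal{G}}_0=\{\int_0^\cdot\tilde H:\tilde H\in\mathcal{G}_0\}$ inherits closedness from $\mathcal{G}_0$, which the example above refutes. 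To make the consistency claim correct one needs a stronger hypothesis, e.g.\ that $\widetilde{\mathcal{G}}_0$ itself is closed in the uniform norm, or that $\mathcal{G}_0$ is compact (or uniformly equicontinuous and closed), which can be checked directly for the parametric examples in the paper. Apart from this point --- which you correctly identified but did not resolve --- your argument is sound.
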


An alternative goodness-of-fit test for the classical mBm and a singleton null $\mathcal{G}_0 = \{\tilde{H}\}$ has been suggested by \cite{bertrand2018}, based on a Cramer-von-Mises type test statistic of the form $\sum_{l=1}^L |\widehat{H}_n(\frac{l}{l}) - \tilde{H}(\frac{l}{L})|^2$ which they show to be asymptotically normal if $L=L_n\to \infty$ suitably as $n\to\infty$. However, they do not specify the constraints on $L_n$, making their test practically infeasible. 
Moreover, they suppose $\sigma$ to be constant.

\section{Discussion}

We constructed two local nonparametric estimators for the Hurst exponent of the Itô-mBm which achieve standard nonparametric rates of convergence even in the smoothness regime $\eta<1$. 
In contrast, existing estimators for the classical mBm require $\eta>1$. 
This suggests that the statistical treatment of the Itô-mBm is in some sense simpler, as the smoothness of the Hurst function is less critical, which is in analogy to previous findings on its path regularity. 
However, it is currently not clear whether the difficulties of estimating the classical mBm with a roguher Hurst function are intrinsic to the problem, or could be overcome by improved estimation procedures. Future work might further explore this by either deriving statistical lower bounds for estimation of the classical mBm, or by achieving standard rates of convergence with alternative estimators.

Although we argue that the Itô-mBm is superior to the classical mBm for statistical modeling, a big advantage of the latter is its explicit covariance function, which enables exact likelihood inference and efficient simulation.
Indeed, we attempted to simulate the Itô-mBm by discretizing the stochastic integral, but could not reach satisfactory accuracy for a statistical simulation study.

Lastly, we advocate for the use of Itô-mBm over the classical mBm on the basis of its attractive analytical and statistical properties. 
However, this is a purely mathematical argument and not based on empirical evidence. 
Obviously, the looming open questions are: (i) Can we distinguish the two models based on data?
(ii) Do the two models lead to different conclusions for practical questions, e.g.\ forecasting or asset pricing? 
We formulate these open questions as promising directions for future research, especially in view of the recent success of fractional models for stochastic volatility \citep{gatheral2018volatility,chong_nonparametric_2025}.

\appendix
\section{Multiplier invariance principle}\label{sec:multiplier}

For the proof of Theorem \ref{thm:integrated-clt}, we make use of a functional central limit theorem for nonstationary time series, developed in prior work \citep{mies_strong_2024}. To make this article self-contained, we repeat the essential assumptions and the result in this section. 

For iid random seeds $\epsilon_i \sim U(0,1)$, and functions $G_{t,n}:\R^\infty \to \R^d$, $t=1,\ldots, n$, define the nonstationary array of time series $X_{t,n}$ as 
\begin{align*}
    X_{t,n} &= G_{t,n}(\beps_t), \quad t=1,\ldots, n, \\
    \beps_t &= (\epsilon_t, \epsilon_{t-1},\ldots) \in\R^\infty.
\end{align*}
Throughout this section, we assume that $\E(X_{t,n})=0$.
For an independent copy $\tilde{\epsilon}_i\sim U(0,1)$, define also 
\begin{align*}
    \tilde{\beps}_{t,h} &= (\epsilon_t, \ldots, \epsilon_{t-h+1}, \tilde{\epsilon}_{t-h},\epsilon_{t-h-1}\ldots) \in \R^\infty, \\
    \beps_{t,h} &= (\epsilon_t, \ldots, \epsilon_{t-h+1}, \tilde{\epsilon}_{t-h},\tilde{\epsilon}_{t-h-1}\ldots) \in \R^\infty.
\end{align*}
We first impose the following set of assumptions, for some $\Gamma_n\geq 1$, $q>2$, and $\beta>1$:
\begin{align*}
    \|G_{1,n}(\beps_0)\|_{L_2} + \sum_{t=2}^n \|G_{t,n}(\beps_0) - G_{t-1,n}(\beps_0)\|_{L_2}  &\leq \Theta_n \Gamma_n, \tag{A.1} \label{ass:A1} \\
    \max_{t=1,\ldots, n} \|G_{t,n}(\beps_0) - G_{t,n}(\tilde{\beps}_{0,h}) \|_{L_q} &\leq \Theta_n (h+1)^{-\beta}, \tag{A.2} \label{ass:A2} \\
    \int_0^1\|G_{\lfloor vn\rfloor,n}(\beps_0) - G_{v}(\beps_0)\|_{L_2}\, dv &\to 0.\tag{A.3}\label{ass:A3} 
\end{align*}

For non-random matrices $g_{t,n}, g_u \in \R^{m\times d}$, and random matrices $\hat{g}_{t,n}\in\R^{m\times d}$, define

\begin{align*}
    \Lambda_n& =\sqrt{\sum_{t=1}^n \|\hat{g}_{t,n} - g_{t,n}\|_{L_2}^2} ,  \\
    \Psi_n &=\sum_{t=2}^n \|g_{t,n} - g_{t-1,n}\| , \\
    \Phi_n &= \max_{t=1,\ldots,n} \|g_{t,n}\| + \sup_{u\in[0,1]} \|g_u\|, 
\end{align*}
and formulate the assumption 
\begin{align*}
    \int_0^1 \|g_{\lfloor vn\rfloor,n} - g_v\|\, dv &\to 0. \tag{A.4} \label{ass:A4}
\end{align*}

Define the rate 
\begin{align*}
    \xi(q,\beta) = 
	\begin{cases}
		\frac{q-2}{6q-4}, 
		& \beta\geq \frac{3}{2},\, \beta> \frac{2q}{q+2}, \\
		\frac{\beta-1}{4\beta-2}, 
		& \beta\geq \frac{3}{2},\, \beta \leq \frac{2q}{q+2}, \\
		\frac{(\beta-1)(q-2)}{4q\beta-3q-2}, 
		& \beta<\frac{3}{2},\, \beta > \frac{2q}{q+2}, \\
		\frac{(\beta-1)^2}{2\beta^2-1-\beta}, 
		& \beta<\frac{3}{2},\, \beta\leq \frac{2q}{q+2},
	\end{cases}
	\qquad q>2,\, \beta>1.
\end{align*}
and the local long run covariance matrices
\begin{align*}
    \Sigma_{t,n}
    &= \sum_{h=-\infty}^\infty g_{t,n} \Cov[G_{t,n}(\beps_0), G_{t,n}(\beps_h)] g_{t,n}^T, \\
    \Sigma_u &= \sum_{h=-\infty}^\infty g_{u} \Cov[G_{u}(\beps_0), G_{u}(\beps_h)] g_{u}^T.
\end{align*}

\begin{theorem}\label{thm:multiplier-FCLT}
    Suppose that $\hat{g}_{t,n}$ is $\beps_{t-L}$-measurable for $L=L_n$, and let \eqref{ass:A1}, \eqref{ass:A2}, \eqref{ass:A3}, \eqref{ass:A4} hold with $\Theta_n, \Phi_n=\mathcal{O}(1)$ such that
    \begin{align*}
        n^{-\frac{1}{2}} \Lambda_n^2 + \Lambda_n L_n^{1-\beta} + n^{\frac{1}{q}-\frac{1}{2}} L_n \to 0 \\
        \left( \Gamma_n + \Psi_n \right)^{\frac{\beta-1}{2\beta}} \sqrt{\log(n)} n^{-\xi(q,\beta)} \to 0.
    \end{align*}
    Then
    \begin{align*}
        \frac{1}{\sqrt{n}} \sum_{t=1}^{\floor{un}} \hat{g}_{t,n} [X_{t,n}-\E(X_{t,n})] \wconv \int_0^u \Sigma_v^\frac{1}{2}\, dW_v.
    \end{align*}
\end{theorem}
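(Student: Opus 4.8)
The plan is to isolate a deterministically-weighted partial sum, for which the invariance principle is already available, and to treat the randomness of the weights as an asymptotically negligible perturbation. Writing $Y_{t,n}=X_{t,n}-\E(X_{t,n})$, I would decompose
\begin{align*}
\frac{1}{\sqrt n}\sum_{t=1}^{\floor{un}} \hat g_{t,n}\,Y_{t,n}
&= \underbrace{\frac{1}{\sqrt n}\sum_{t=1}^{\floor{un}} g_{t,n}\,Y_{t,n}}_{=:S_n(u)}
+ \underbrace{\frac{1}{\sqrt n}\sum_{t=1}^{\floor{un}} (\hat g_{t,n}-g_{t,n})\,Y_{t,n}}_{=:R_n(u)},
\end{align*}
and then establish that $S_n\wconv\int_0^\cdot\Sigma_v^{1/2}\,dW_v$ in the Skorokhod space $D[0,1]$ while $\sup_{u\in[0,1]}\|R_n(u)\|\pconv 0$; the assertion follows by Slutsky's theorem for weak convergence in $D[0,1]$.

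For the first part, I would observe that $Z_{t,n}:=g_{t,n}[G_{t,n}(\beps_t)-\E G_{t,n}(\beps_t)]$ is again a centered nonstationary functional time series $Z_{t,n}=\widetilde G_{t,n}(\beps_t)$, whose structural assumptions are inherited from those on $G_{t,n}$: combining \eqref{ass:A1} with the total variation $\Psi_n$ and the uniform bound $\Phi_n$ of the weights shows that $Z_{t,n}$ obeys \eqref{ass:A1} with $\Gamma_n$ replaced by a constant multiple of $\Gamma_n+\Psi_n$; \eqref{ass:A2} transfers with $\Theta_n$ replaced by $\Phi_n\Theta_n=\mathcal O(\Theta_n)$; and \eqref{ass:A3} together with \eqref{ass:A4} gives local stationarity of $Z_{t,n}$ towards $Z_v=g_v[G_v(\beps_0)-\E G_v(\beps_0)]$. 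The invariance principle for such modulated processes — which is exactly the statement being recalled here from \citep{mies_strong_2024}, now with the deterministic weights absorbed into the innovation maps — then yields $S_n\wconv\int_0^\cdot\Sigma_v^{1/2}\,dW_v$; the condition $(\Gamma_n+\Psi_n)^{(\beta-1)/(2\beta)}\sqrt{\log n}\,n^{-\xi(q,\beta)}\to0$ is what keeps the nonstationarity budget under control, and the limiting covariance is read off from $\Sigma_{\floor{vn},n}\to\Sigma_v$.

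The substance of the argument is the negligibility of $R_n$, where the $\beps_{t-L}$-measurability of $\hat g_{t,n}$ is essential. I would write $D_{t,n}=\hat g_{t,n}-g_{t,n}$, let $P_k=\E[\,\cdot\mid\beps_k]-\E[\,\cdot\mid\beps_{k-1}]$ denote the martingale-difference projections and $\delta_j$ the functional-dependence coefficients of $G_{t,n}$ (so $\delta_j\lesssim\Theta_n(j+1)^{-\beta}$ by \eqref{ass:A2}), and split $Y_{t,n}=\sum_{j\ge 0}P_{t-j}Y_{t,n}$ at $j=L$. For each fixed $j<L$, the array $(D_{t,n}P_{t-j}Y_{t,n})_t$ is a martingale difference sequence with respect to $(\sigma(\epsilon_s:s\le t-j))_t$, because $D_{t,n}$ is $\beps_{t-L}$- and hence $\beps_{t-j-1}$-measurable while $\E[P_{t-j}Y_{t,n}\mid\beps_{t-j-1}]=0$; Doob's maximal inequality and orthogonality of martingale increments then bound the contribution of these terms to $\sup_u\|R_n(u)\|$, in $L_2$, by roughly $\tfrac{1}{\sqrt n}\big(\sum_{j\ge0}\delta_j\big)\big(\sum_{t}\|D_{t,n}\|_{L_2}^2\big)^{1/2}\lesssim \Lambda_n/\sqrt n$, which vanishes since $n^{-1/2}\Lambda_n^2\to0$. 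For $j\ge L$ one has $\sum_{j\ge L}P_{t-j}Y_{t,n}=\E[Y_{t,n}\mid\beps_{t-L}]$, whose $L_2$-norm is $\lesssim\Theta_n L^{1-\beta}$ by \eqref{ass:A2} and $\beta>1$, so by Cauchy--Schwarz the remaining contribution is at most $\tfrac1{\sqrt n}\sum_{t=1}^n\|D_{t,n}\|_{L_2}\cdot\mathcal O(L^{1-\beta})\lesssim\Lambda_n L^{1-\beta}\to0$, uniformly in $u$. Since $\hat g_{t,n}$ is controlled only in the aggregate $L_2$ sense via $\Lambda_n$ and not uniformly over $t$, a truncation of $X_{t,n}$ at level $\asymp n^{1/q}$ using its $L_q$-moments, together with a union bound over the $n$ summands, would finally be needed to upgrade these $L_2$-bounds to convergence in probability; this is the step that produces the term $n^{1/q-1/2}L_n$.

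The hard part, I expect, is exactly this last step: reconciling the merely aggregate ($L_2$) control of $\hat g_{t,n}-g_{t,n}$ with the need for estimates that are uniform over $t=1,\ldots,n$ forces one to exploit the $L_q$-moments of $X_{t,n}$ and the measurability gap of length $L$ simultaneously, and this is where the interplay between the three summability conditions on $\Lambda_n$, $L_n$, $q$ and $\beta$ becomes genuinely delicate. By contrast, the deterministically-weighted invariance principle is essentially inherited from \citep{mies_strong_2024}, and the remainder-term bookkeeping, once the martingale structure of the recent-past part has been identified, is routine.
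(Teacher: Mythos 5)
This theorem is not proved in the paper at all: Appendix \ref{sec:multiplier} explicitly restates it from \citep{mies_strong_2024} ``to make this article self-contained,'' so the paper's own ``proof'' is a citation, and there is no in-paper argument to compare yours against. Your sketch does reconstruct the right skeleton, and it is consistent with how results of this type are proved: the split into a deterministically-weighted sum $S_n$ (handled by the FCLT for locally stationary Bernoulli-shift arrays, governed by the condition $(\Gamma_n+\Psi_n)^{(\beta-1)/(2\beta)}\sqrt{\log n}\,n^{-\xi(q,\beta)}\to0$) and a perturbation $R_n$ from the estimated weights, with the $\beps_{t-L}$-measurability of $\hat g_{t,n}$ used to turn the recent-past part of $Y_{t,n}$ into martingale differences and the far-past part into a term of size $\Lambda_nL_n^{1-\beta}$, is exactly the mechanism the three rate conditions are designed for.

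There is, however, a genuine gap in the one place you flag as routine. For $j<L$ you bound the martingale contribution by
$\tfrac{1}{\sqrt n}\bigl(\sum_{j}\delta_j\bigr)\bigl(\sum_t\|D_{t,n}\|_{L_2}^2\bigr)^{1/2}\lesssim\Lambda_n/\sqrt n$,
which implicitly uses $\|D_{t,n}P_{t-j}Y_{t,n}\|_{L_2}\le\|D_{t,n}\|_{L_2}\,\delta_j$. That inequality is not available: $D_{t,n}$ and $P_{t-j}Y_{t,n}$ are dependent (both are functionals of the same innovations $\epsilon_s$, $s\le t-j$), and $P_{t-j}Y_{t,n}$ is controlled only in $L_q$, not in $L_\infty$, so Cauchy--Schwarz or H\"older costs you moments rather than giving a product of the two $L_2$ rates. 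You acknowledge that a truncation at level $n^{1/q}$ plus a union bound must repair this, but you do not carry it out, and the mismatch shows in the bookkeeping: your claimed bound $\Lambda_n/\sqrt n$ would vanish whenever $\Lambda_n\ll\sqrt n$, whereas the theorem imposes the strictly stronger $n^{-1/2}\Lambda_n^2\to0$ (i.e.\ $\Lambda_n\ll n^{1/4}$) together with $n^{1/q-1/2}L_n\to0$. Those two conditions are precisely the fingerprint of the H\"older/truncation step you have deferred, so as written the remainder analysis is not closed; the rest of the argument (the far-past term of order $\Lambda_nL_n^{1-\beta}$, and the deterministic-weight FCLT) is correctly set up.
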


\section{Proofs}\label{sec:proofs}

\subsection{Preliminaries}

For $H\in(0,1)$, define 
\begin{align*}
    \gamma_H(h) &= |h+1|^{2H}-2|h|^{2H}+|h-1|^{2H}, h\in \R,\\
    \Gamma_H(h) &= -\gamma_H(h+1) +2\gamma_H(h) - \gamma_H(h-1).
\end{align*}
Note that $\gamma_H(h)$ is the autocovariance function of the increments of a fractional Brownian motion with Hurst parameter $H$, and $\Gamma_H(h)$ is the autocovariance of the corresponding second order increments.
In particular, if $(\sigma_s, H_s) \equiv (\sigma,H)$, then $\Cov(\chi_{i,n}, \chi_{j,n}) = n^{-2H}\sigma^2 \Gamma_H(i-j)$.
Moreover, we note that $\Gamma_H(h) \asymp |h|^{2H-4}$ as $|h|\to\infty$.

We proceed to give quantitative bounds on the approximation error between the nonstationary process and its stationary tangent process.
The following Lemma is in analogy to \cite[Lemma 1]{Coeurjolly2005} for the classical mBm.
The notable difference is that we do not require $\eta>\sup_v H_v$, which is an advantage of the Itô-mBm model.

\begin{lemma}\label{lem:bias}
    Suppose that $v\mapsto \theta_v = (\sigma_v, H_v)$ is Hölder continuous with exponent $\eta\in(0,1]$ on some open interval $I=(a,b]\subset (-\infty,1]$, and that $0<\underline{H} \leq H_v \leq \overline{H} < 1$ for all $v\in(-\infty,1]$, and $\sigma_v^2 \leq \overline{\sigma}^2<\infty$.
    For any $\theta = (\sigma^2, H)$ and $\frac{i}{n}\in I$, it holds that
    \begin{align*}
        n^{2H} \E \chi_{i,n}^2 &= \sigma^2 \Gamma_H(0) + \mathcal{O}\left(\log(n) n^{-\eta}\right) + \mathcal{O}\left(\|\theta-\theta_{\frac{i}{n}}\| \log(n) n^{ 2\|\theta-\theta_{\frac{i}{n}}\|}\right).
    \end{align*}
    For any interval $I$, the bound holds uniformly for $\theta \in [0,\overline{\sigma}^2] \times [\underline{H},\overline{H}]$.
\end{lemma}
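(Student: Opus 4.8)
The plan is to evaluate $\E\chi_{i,n}^2$ through the Wiener isometry and compare the resulting integral with the one representing the frozen quantity $\sigma^2\Gamma_H(0)$. Carrying the linear combination defining $\chi_{i,n}$ inside the integral, the ``level'' terms $(-s)_+^{H_s-\frac12}$ cancel because $1-2+1=0$, so for $i\ge 2$
\begin{align*}
    \chi_{i,n} = \int_{-\infty}^{i/n} \sigma_s\,\psi_{i,n}(s)\,dB_s,\qquad
    \psi_{i,n}(s)=\sum_{k=0}^{2} (-1)^k\binom{2}{k}\bigl(\tfrac{i-k}{n}-s\bigr)_+^{H_s-\frac12},
\end{align*}
and hence $\E\chi_{i,n}^2=\int_{-\infty}^{i/n}\sigma_s^2\,\psi_{i,n}(s)^2\,ds$. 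Substituting $s=\tfrac in-\tfrac vn$ and writing $\phi_H(v)=v_+^{H-\frac12}-2(v-1)_+^{H-\frac12}+(v-2)_+^{H-\frac12}$ gives $\psi_{i,n}(\tfrac in-\tfrac vn)=n^{-(H_s-\frac12)}\phi_{H_s}(v)$ with $s=\tfrac in-\tfrac vn$, so that
\begin{align*}
    n^{2H}\,\E\chi_{i,n}^2=\int_0^\infty \sigma_{s}^2\, n^{2H-2H_{s}}\,\phi_{H_{s}}(v)^2\,dv,\qquad s=\tfrac in-\tfrac vn .
\end{align*}
Applying this in the frozen case $(\sigma_\cdot,H_\cdot)\equiv(\sigma,H)$, where the covariance formula from the preliminaries gives $n^{2H}\E\chi_{i,n}^2=\sigma^2\Gamma_H(0)$, also yields the identity $\Gamma_H(0)=\int_0^\infty\phi_H(v)^2\,dv$; throughout one uses that $\phi_{H'}$ has only square-integrable singularities at $v\in\{0,1,2\}$ and decays like $v^{H'-5/2}$ as $v\to\infty$, uniformly for $H'\in[\underline H,\overline H]$, and similarly for $\partial_{H'}\phi_{H'}$ up to an extra $\log v$. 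The quantity to bound is therefore $\int_0^\infty\bigl[\sigma_{s}^2 n^{2H-2H_{s}}\phi_{H_{s}}(v)^2-\sigma^2\phi_H(v)^2\bigr]\,dv$.

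I would split this integral at $v=V_n:=n^{\rho}$. On the far part $v>V_n$ — well past the singularities, since $V_n\to\infty$ — the cancellation $n^{2H-2H_{s}}v^{2H_{s}-5}=n^{2H-5}(v/n)^{2H_{s}-5}$ removes the exponential mismatch, and with $\phi_{H_{s}}(v)^2\lesssim v^{2H_{s}-5}$ and $\sigma_{s}^2\le\overline\sigma^2$ the far contribution — together with the tail $\sigma^2\int_{V_n}^\infty\phi_H(v)^2\,dv$ — is $\mathcal O\!\bigl(n^{2\overline H-2\underline H}V_n^{2\underline H-4}+V_n^{2\overline H-4}\bigr)$. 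This is $\mathcal O(n^{-\eta})$ as soon as $\rho$ exceeds the threshold $\tfrac{2\overline H-2\underline H+\eta}{4-2\underline H}\vee\tfrac{\eta}{4-2\overline H}$, which is strictly below $1$ precisely because $\overline H<1$ and $\eta\le1$; so a feasible $\rho\in(0,1)$ exists.

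On the near part $v\le V_n$ the point $s=\tfrac in-\tfrac vn$ lies in $I$ within distance $V_n/n\to0$ of $\tfrac in$, so Hölder continuity gives $|H_{s}-H_{i/n}|+|\sigma_{s}^2-\sigma_{i/n}^2|\lesssim(v/n)^\eta$, and since $\rho<1$ the local scaling factor obeys $n^{2(H_{i/n}-H_{s})}=1+\mathcal O\bigl((v/n)^\eta\log n\bigr)$ uniformly in $v\le V_n$. Taylor-expanding the integrand about its ``frozen at $\tfrac in$'' value $\sigma_{i/n}^2 n^{2(H-H_{i/n})}\phi_{H_{i/n}}(v)^2$, the only factor whose linearization costs $\mathcal O((v/n)^\eta\log n)$ rather than $\mathcal O((v/n)^\eta)$ is $n^{2(H_{i/n}-H_{s})}$ — this is exactly the origin of the $\log n$ in the bias. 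The remainder is bounded by $C\,n^{2(H-H_{i/n})}(v/n)^\eta\log(n)\,\Phi(v)$ for an integrable $\Phi$ (absorbing the singularities of $\phi_{H'},\partial_{H'}\phi_{H'}$) with $\int_0^\infty v^\eta\Phi(v)\,dv<\infty$, and $\int_0^{V_n}\phi_{H_{i/n}}(v)^2\,dv=\Gamma_{H_{i/n}}(0)+\mathcal O(n^{-\eta})$; hence $n^{2H}\E\chi_{i,n}^2=n^{2(H-H_{i/n})}\sigma_{i/n}^2\Gamma_{H_{i/n}}(0)+\mathcal O\bigl(n^{2(H-H_{i/n})}\log(n)\,n^{-\eta}\bigr)$. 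It remains to replace $n^{2(H-H_{i/n})}\sigma_{i/n}^2\Gamma_{H_{i/n}}(0)$ by $\sigma^2\Gamma_H(0)$: splitting off $(n^{2(H-H_{i/n})}-1)\sigma_{i/n}^2\Gamma_{H_{i/n}}(0)$ and using $|n^{2x}-1|\le2|x|\log(n)\,n^{2|x|}$ together with the Lipschitz continuity of $(\sigma^2,H)\mapsto\sigma^2\Gamma_H(0)$ on $[0,\overline\sigma^2]\times[\underline H,\overline H]$, this costs $\mathcal O(\|\theta-\theta_{i/n}\|\log(n)\,n^{2\|\theta-\theta_{i/n}\|})$. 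Since $n^{2(H-H_{i/n})}\le n^{2\|\theta-\theta_{i/n}\|}$, both error terms are as claimed, and they collapse to $\mathcal O(\log(n)\,n^{-\eta})$ in the case $\theta=\theta_{i/n}$ used in the main text.

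The crux is the $H$-in-the-exponent structure: $n^{2(H_{i/n}-H_{s})}$ is only within $n^{\pm C(v/n)^\eta}$ of $1$, so the near/far cutoff must be small enough ($\rho<1$) for the linearization in $H$ to hold, yet large enough ($\rho$ near $1$) for the polynomial tail of $\phi_{H_{s}}$ to be negligible; reconciling these two demands is what forces $\overline H<1$ into the argument, and the linearization unavoidably produces the $\log n$ factor. Everything else — the uniform-in-$H'$ bounds on $\phi_{H'}$, $\partial_{H'}\phi_{H'}$ near $v\in\{0,1,2\}$ and at $\infty$, and the boundary bookkeeping for the smallest $i$ — is routine but somewhat tedious.
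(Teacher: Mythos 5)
Your proposal is correct and takes essentially the same route as the paper's proof: Itô isometry, rescaling to the integrand $\sigma_s^2\,n^{2H-2H_s}\phi_{H_s}(v)^2$, a near/far split at $v=n^{\rho}$ with a feasible $\rho<1$ because $\overline{H}<1$, Hölder continuity plus linearization of the $n$-power in $H$ on the near range (the source of the $\log n$), and a frozen-parameter comparison producing the $\|\theta-\theta_{i/n}\|\log(n)\,n^{2\|\theta-\theta_{i/n}\|}$ term. The only difference is the order of the two comparisons (local-to-$\theta_{i/n}$ versus $\theta_{i/n}$-to-$\theta$), which is immaterial since your residual factor $n^{2(H-H_{i/n})}$ is absorbed into the stated error terms exactly as you indicate.
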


\begin{proof}[Proof of Lemma \ref{lem:bias}]
    Denote $t_{i} = \frac{i}{n}$, and $\alpha_s = H_s-\frac{1}{2}$.
    Then, by Itô's isometry,
    \begin{align*}
        \E \chi_{i,n}^2 
        &= \int_{-\infty}^{t_i} \sigma_s^2\left[ (t_i - s)_+^{\alpha_s} - 2 (t_{i-1}-s)_+^{\alpha_s} + (t_{i-2}-s)_+^{\alpha_s} \right]^2\, ds  \\
        &= n^{-1}\int_{-\infty}^{n \cdot t_i} \sigma_{s/n}^2\left[ (t_i - \tfrac{s}{n})_+^{\alpha_{s/n}} - 2 (t_{i-1}-\tfrac{s}{n})_+^{\alpha_{s/n}} + (t_{i-2}-\tfrac{s}{n})_+^{\alpha_{s/n}} \right]^2\, ds \\
        &= \int_{-\infty}^{i} n^{-2H_{s/n}} \sigma^2_{s/n} \left[ (i-s)_+^{\alpha_{s/n}} - 2(i-1-s)_+^{\alpha_{s/n}} + (i-2-s)_+^{\alpha_{s/n}} \right]^2\, ds \\
        &= \int_0^\infty g(v, \theta_{\frac{i}{n} - \frac{v}{n}})\, dv, \qquad \theta_v = (H_v, \sigma_v^2), \\
        g(v,\theta) 
        &= n^{-2H} \sigma^2 \left[v^{H-\frac{1}{2}} - 2(v-1)_+^{H-\frac{1}{2}} + (v-2)_+^{H-\frac{1}{2}} \right]^2.
    \end{align*}
    Now observe that
    \begin{align*}
        |\partial_\sigma g(v,\theta)|
        & \leq n^{-2H} [v^{2H-1} \wedge v^{2H-5}],\\
        |\partial_H g(v,\theta)|
        & \leq n^{-2H} (16\sigma^2) [v^{2H-1} \wedge v^{2H-5}]\,[\log(n) + |\log(v)|].
    \end{align*}
    We may thus bound
    \begin{align*}
        &\int_0^\infty |g(v, \theta) - g(v, \theta_{\frac{i}{n}-\frac{v}{n}})|\, dv \\
        &\leq \int_0^\infty |g(v, \theta) - g(v, \theta_{\frac{i}{n}})|\, dv \;+\; \int_0^\infty | g(v, \theta_{\frac{i}{n}}) - g(v, \theta_{\frac{i}{n}-\frac{v}{n}})| \, dv \\
        &= A_1 + A_2,
    \end{align*}
    where, for some small $q\in(0,1)$,
    \begin{align*}
        A_1
        &\leq C n^{-2(H\wedge H_{\frac{i}{n}})}  \left[ \log(n) |H - H_{\frac{i}{n}}| \;+\; |\sigma^2 - \sigma^2_{\frac{i}{n}}|  \right], \\
        A_2 
        &\leq \int_0^{n^{1-q}} | g(v, \theta_{\frac{i}{n}}) - g(v, \theta_{\frac{i}{n}-\frac{v}{n}})| \, dv \\
        &\qquad + \int_{n^{1-q}}^\infty | g(v, \theta_{\frac{i}{n}}) - g(v, \theta_{\frac{i}{n}-\frac{v}{n}})| \, dv .
    \end{align*}
    Using the local Hölder continuity of $v\mapsto \theta_v$, we find that
    \begin{align*}
        &\int_0^{n^{1-q}} | g(v, \theta_{\frac{i}{n}}) - g(v, \theta_{\frac{i}{n}-\frac{v}{n}})| \, dv \\
        &\leq C \log(n) \max_{s\in[0, n^{-q}]} n^{-2H_{\frac{i}{n}-s}} \int_0^{n^{1-q}} \left( \tfrac{v}{n} \right)^\eta |v^{2\underline{H}-1} \wedge v^{2\overline{H}-5}| |\log(v)|\, dv \\
        &\leq C \log(n) n^{-2H} n^{-\eta},
    \end{align*}
    because $n^{n^{-\eta q}} \to 1$.
    Moreover,
    \begin{align*}
        \int_{n^{1-q}}^\infty | g(v, \theta_{\frac{i}{n}}) - g(v, \theta_{\frac{i}{n}-\frac{v}{n}})| \, dv
        &\leq  C \int_{n^{1-q}}^\infty \sup_{H\in[\underline{H},\overline{H}]} n^{-2H} v^{2H-5}\, dv \\
        &= \int_{n^{1-q}}^n \left( \tfrac{v}{n} \right)^{2\underline{H}} v^{-5}\, dv + \int_{n}^\infty \left( \tfrac{v}{n} \right)^{2\overline{H}} v^{-5}\, dv \\
        &\leq C (n^{(1-q)(2\underline{H}-4) - 2\underline{H}} + n^{-4}) \\
        &\leq C n^{-4+\epsilon},
    \end{align*}
    for any $\epsilon>0$, by making $q>0$ sufficiently small. 
    Since $\eta\in(0,1]$ and $H\in(0,1)$, the choice $\epsilon = \frac{1}{2}$ yields $n^{-4+\epsilon} \ll \log(n) n^{-2H-\eta}$.
    
    Hence, we have shown that
    \begin{align*}
        \int_0^\infty g(v,\theta_{\frac{i}{n} - \frac{v}{n}})\, dv 
        &= \int_0^\infty g(v,\theta)\, dv + \mathcal{O}\left(\log(n) n^{-2H-\eta}\right) \\
        &\qquad + \mathcal{O}\left(\|\theta-\theta_{\frac{i}{n}}\| \log(n) n^{-2H + 2\|\theta-\theta_{\frac{i}{n}}\|}\right).
    \end{align*}
    To complete the proof, we observe that $\int_0^\infty g(v,\theta) \, dv = n^{-2H} \sigma^2 \Gamma_H(0)$. 
\end{proof}

\begin{lemma}\label{lem:cov}
    Suppose that $v\mapsto \theta_v = (\sigma_v^2, H_v)$ is Hölder continuous with exponent $\eta\in(0,1]$ on some interval $I=(a,b]\subset (-\infty,1]$, and that $0<\underline{H} \leq H_v \leq \overline{H} < 1$ for all $v$, and $\sigma_v^2 \leq \overline{\sigma}^2<\infty$.
    Then for all $1\leq i \leq j \leq n$, such that $\frac{i}{n}\in I$,
    \begin{align*}
        \Cov(\chi_{i,n}, \, \chi_{j,n}) 
        & = n^{-2H_{i/n}} \sigma_{i/n}^2 \Gamma_{H_{i/n}}(i-j)  \\
        & \qquad + \mathcal{O}\left( \log(n) n^{-2H_{i/n}-(\eta \wedge \frac{1}{2})} (|i-j|\vee 1)^{H_{i/n}-\frac{5}{2}} \right). 
    \end{align*}
    For any interval $I$, the bound holds uniformly for $\theta \in [0,\overline{\sigma}^2] \times [\underline{H},\overline{H}]$.
\end{lemma}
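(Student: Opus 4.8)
The plan is to adapt the proof of Lemma \ref{lem:bias}, additionally tracking the off-diagonal decay in $|i-j|$. Write $k=j-i\geq0$, $t_m=m/n$, $\alpha_s=H_s-\tfrac12$. Since the Mandelbrot--van Ness kernel of $\chi_{j,n}$ is supported on $\{s\leq t_j\}$ and that of $\chi_{i,n}$ on $\{s\leq t_i\leq t_j\}$, Itô's isometry gives
\[
    \Cov(\chi_{i,n},\chi_{j,n})=\int_{-\infty}^{t_i}\sigma_s^2\,\psi_i(s)\,\psi_j(s)\,ds,\qquad
    \psi_m(s)=(t_m-s)_+^{\alpha_s}-2(t_{m-1}-s)_+^{\alpha_s}+(t_{m-2}-s)_+^{\alpha_s}.
\]
Substituting $s=t_i-\tfrac{v}{n}$, so that $n(t_m-s)=v+m-i$, exactly as in Lemma \ref{lem:bias}, turns this into $\int_0^\infty\tilde g(v,\theta_{t_i-v/n},k)\,dv$ where
\[
    \tilde g\big(v,(\sigma^2,H),k\big)=n^{-2H}\sigma^2\,P_H(v)\,P_H(v+k),\qquad
    P_H(w)=w^{H-\frac12}-2(w-1)_+^{H-\frac12}+(w-2)_+^{H-\frac12}.
\]
By the stationary covariance identity recalled in the Preliminaries together with the evenness of $\Gamma_H$, one has $\int_0^\infty\tilde g(v,(\sigma_{t_i}^2,H_{t_i}),k)\,dv=n^{-2H_{t_i}}\sigma_{t_i}^2\Gamma_{H_{t_i}}(i-j)$, so it remains to bound $\int_0^\infty\big|\tilde g(v,\theta_{t_i-v/n},k)-\tilde g(v,\theta_{t_i},k)\big|\,dv$.

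As in Lemma \ref{lem:bias}, I would first record the kernel bound $|P_H(v)|\lesssim\kappa_H(v)$, where $\kappa_H(v)=v^{H-\frac12}\wedge v^{H-\frac52}$ (up to integrable singularities of $P_H$ at $v=1,2$ when $H<\tfrac12$, handled as there), and the $\theta$-derivative bounds $|\partial_{\sigma^2}\tilde g(v,\theta,k)|\lesssim n^{-2H}\kappa_H(v)\kappa_H(v+k)$ and $|\partial_H\tilde g(v,\theta,k)|\lesssim n^{-2H}\big(\log n+|\log v|+|\log(v+k)|\big)\kappa_H(v)\kappa_H(v+k)$, uniformly over $\theta\in[0,\overline\sigma^2]\times[\underline H,\overline H]$. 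Because the Hölder estimate $\|\theta_{t_i-v/n}-\theta_{t_i}\|\lesssim(v/n)^\eta$ is available only while $t_i-v/n\in I$, I split the integral at $v=n^{1-q}$ for a sufficiently small $q\in(0,1)$; on the lower piece $v/n\leq n^{-q}\to0$, which both forces $t_i-v/n\in I$ eventually and makes $n^{\,2\|\theta_{t_i-v/n}-\theta_{t_i}\|}\to1$. On $\{v\geq n^{1-q}\}$ I discard the Hölder information, bound each summand by $\sup_{H}n^{-2H}\kappa_H(v)\kappa_H(v+k)$, and use the fast $v^{2H-5}$-type decay (the $n^{-2H}$ prefactor being essential here) to render this tail negligible relative to the asserted error, uniformly in $k$, just as in Lemma \ref{lem:bias}. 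On $\{v<n^{1-q}\}$ I insert $\|\theta_{t_i-v/n}-\theta_{t_i}\|\lesssim(v/n)^\eta$ through the mean value theorem and the derivative bounds.

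This reduces the claim to the deterministic estimate
\[
    \int_0^{n^{1-q}}\Big(\tfrac{v}{n}\Big)^{\eta}\big(\log n+|\log v|+|\log(v+k)|\big)\,\kappa_H(v)\,\kappa_H(v+k)\,dv\;\lesssim\;\log(n)\,n^{-(\eta\wedge\frac12)}\,(k\vee1)^{H-\frac52},
\]
uniformly for $H\in[\underline H,\overline H]$ and $1\leq k\leq n$. Here I split the $v$-range into $v\lesssim1$, $1\lesssim v\lesssim k$, and $k\lesssim v\lesssim n^{1-q}$: on $\{v\lesssim k\}$ one has $\kappa_H(v+k)\asymp k^{H-\frac52}$, whereas on $\{v\gtrsim k\}$ one has $\kappa_H(v)\kappa_H(v+k)\asymp v^{2H-5}$, and in each region the $v$-integral is governed by its behaviour near $v\asymp1$ or $v\asymp k$. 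The crucial point is that the contribution near $v\asymp k$ produces a $k$-power $k^{\eta+H-\frac52}$ (or $k^{\eta+2H-4}$ from the far tail), which is $\lesssim k^{H-\frac52}$ only when $\eta+H\leq\tfrac32$; replacing $(v/n)^\eta$ by the smaller $(v/n)^{\eta\wedge\frac12}$ — legitimate since $v/n\leq1$ — and using $H<1$ makes $(\eta\wedge\tfrac12)+H<\tfrac32$ hold unconditionally, which is exactly the origin of the rate exponent $\eta\wedge\tfrac12$. The factors $|\log v|$ (integrable near $0$) and $|\log(v+k)|\leq\log(2n)$ cost at most a further $\log n$, merging with the $\log n$ from $\partial_H(n^{-2H})$ into the single $\log n$ in the bound.

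I expect the case bookkeeping in this last step to be the main obstacle: one must check that none of the three $v$-regions — nor the separately treated tail $\{v\geq n^{1-q}\}$ — produces a $k$-power worse than $k^{H-\frac52}$ once $(v/n)^\eta$ is downgraded to $(v/n)^{\eta\wedge\frac12}$, and that this holds uniformly across the full range $1\leq k\leq n$, in particular when $k\gtrsim n^{1-q}$. Everything else is a direct, if somewhat tedious, transcription of the integral estimates from the proof of Lemma \ref{lem:bias}.
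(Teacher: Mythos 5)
Your proposal follows essentially the same route as the paper's proof: Itô isometry, the substitution reducing the covariance to $\int_0^\infty f(v,\delta,\theta_{(i-v)/n})\,dv$, the $\log(n)$-weighted derivative bounds combined with Hölder continuity, the split at $v=n^{1-q}$, and the identification of the stationary integral with $n^{-2H}\sigma^2\Gamma_H(i-j)$. Your trick of downgrading $(v/n)^{\eta}$ to $(v/n)^{\eta\wedge\frac12}$ up front is just a cleaner packaging of the paper's term $n^{(1-q)(H+\eta-\frac32)}$ (resolved there using $H<1$), and the tail bookkeeping you flag does close, exactly as in the paper, by noting $(\delta/n)^{H-\frac52}\geq 1$ and taking $q$ small.
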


\begin{proof}[Proof of Lemma \ref{lem:cov}]
    Denote $t_i=\frac{i}{n}$, and $\alpha_s = H_s-\frac{1}{2}$. 
    By Itô's isometry,
    \begin{align*}
        \Cov(\chi_{i,n}, \chi_{j,n}) 
        &= \int_{-\infty}^{\frac{i}{n}} \sigma_s^2 \left[ (t_i-s)^{\alpha_s} - 2(t_{i-1}-s)_+^{\alpha_s} + (t_{i-2}-s)^{\alpha_s} \right] \\
        &\qquad\qquad \cdot \left[ (t_j-s)^{\alpha_s} - 2(t_{j-1}-s)_+^{\alpha_s} + (t_{j-2}-s)^{\alpha_s} \right] \, ds \\
        &= \int_{-\infty}^{i} n^{-1} \sigma_s^2  \left[ (\tfrac{i}{n}-\tfrac{s}{n})^{\alpha_s} - 2(\tfrac{i-1}{n}-\tfrac{s}{n})_+^{\alpha_s} + (\tfrac{i-2}{n}-\tfrac{s}{n})^{\alpha_s} \right] \\
        &\qquad\qquad \cdot \left[ (\tfrac{j}{n}-\tfrac{s}{n})^{\alpha_s} - 2(\tfrac{j-1}{n}-\tfrac{s}{n})_+^{\alpha_s} + (\tfrac{j-2}{n}-\tfrac{s}{n})^{\alpha_s} \right] \, ds \\
        &= \int_{0}^{\infty} n^{-2H_{\frac{i-v}{n}}}\sigma_s^2 \left[ v^{\alpha_\frac{i-v}{n}} - 2(v-1)_+^{\alpha_\frac{i-v}{n}} + (v-2)^{\alpha_\frac{i-v}{n}} \right] \\
        &\qquad\qquad \cdot \left[ (v+\delta)^{\alpha_\frac{i-v}{n}} - 2(v+\delta-1)_+^{\alpha_\frac{i-v}{n}} + (v+\delta-2)^{\alpha_\frac{i-v}{n}} \right] \, dv \\
        &=: \int_0^\infty f(v, \delta, \theta_{\frac{i-v}{n}})\, dv,
    \end{align*}
    for $\delta = j-i$.
    We observe that, for $\delta \geq 1$,
    \begin{align*}
        \|D_\theta f(v,\delta,\theta)\| 
        &\leq C \log(n) n^{-2H} \left[ v^{H-\frac{1}{2}} \wedge v^{H-\frac{5}{2}} \right]\cdot (v+\delta)^{H-\frac{5}{2}} (1+|\log(v)|) \\
        &\leq  C \log(n) n^{-2H} (\delta\vee v)^{H-\frac{5}{2}} \left[ v^{H-\frac{1}{2}} \wedge v^{H-\frac{5}{2}} \right]
        =: C \log(n) \overline{f}(v,\delta, H).
    \end{align*}
    Using the $\eta$-Hölder continuity of $v\mapsto \theta_v$, with constant $c$, say, we find that
    \begin{align}
        & \left|\Cov(\chi_{i,n}, \chi_{j,n}) - \int_0^\infty f(v,\delta, \theta_{\frac{i}{n}})\,dv \right| \nonumber \\
        &\leq C\log(n) \int_0^\infty \left[ \left(\tfrac{v}{n}\right)^\eta \wedge 1 \right] \sup_{\substack{H\in[\underline{H},\overline{H}]\\ |H - H_{i/n}|\leq c (v/n)^\eta}} \overline{f}(v,\delta, H)\, dv. \label{eq:cov-1}
    \end{align}
    We split the domain of integration into the three segments $[0,1]$, $[1, n^{1-q}]$, and $[n^{1-q},\infty)$, for some small $q\in(0,1)$ to be specified later.
    
    \noindent
    The first portion of the integral may be bounded as
    \begin{align*}
        &\int_0^1 \left[ \left(\tfrac{v}{n}\right)^\eta \wedge 1 \right] \sup_{\substack{H\in[\underline{H},\overline{H}]\\ |H - H_{i/n}|\leq c (v/n)^\eta}} \overline{f}(v,\delta, H)\, dv \\
        &\leq n^{-\eta - 2H_{i/n} + cn^{-\eta}} \int_0^1 v^{\underline{H}+\eta - \frac{1}{2}} \delta^{H_{i/n}+ cn^{-\eta}-\frac{5}{2} } (1+|\log(v)|)\, dv \\
        &\leq C n^{-\eta - 2H_{i/n}} \;\delta^{H_{i/n}+cn^{-\eta}-\frac{5}{2}}.
    \end{align*}
    The second portion of the integral may be bounded as
    \begin{align*}
        &\int_1^{n^{1-q}} \left[ \left(\tfrac{v}{n}\right)^\eta \wedge 1 \right] \sup_{\substack{H\in[\underline{H},\overline{H}]\\ |H - H_{i/n}|\leq c (v/n)^\eta}} \overline{f}(v,\delta, H)\, dv \\
        &\leq n^{-\eta - 2H_{i/n} + cn^{-q\eta}} \int_1^{n^{1-q}} v^{H_{i/n}+\eta - \frac{5}{2}+cn^{-q\eta}} \delta^{H_{i/n}-\frac{5}{2} + cn^{-q\eta}} (1+|\log(v)|)\, dv \\
        &\leq n^{-\eta - 2H_{i/n}} \delta^{H_{i/n}+ c n^{-q\eta}-\frac{5}{2} } \left[ 1 + n^{(1-q)(H_{i/n}+\eta-\frac{3}{2}+cn^{-q\eta})} \right] \\
        &\leq n^{-\eta - 2H_{i/n}} \delta^{H_{i/n} + c n^{-q\eta}-\frac{5}{2} } \left[ 1 + n^{(1-q)(H_{i/n}+\eta-\frac{3}{2})} \right].
    \end{align*}
    The third portion of the integral may be bounded as
    \begin{align*}
        &\int_{n^{1-q}}^\infty \sup_{\substack{H\in[\underline{H},\overline{H}]\\ |H - H_{i/n}|\leq c (v/n)^\eta}} \overline{f}(v,\delta, H)\, dv \\
        &\leq \int_{n^{1-q}}^\infty  \sup_{H\in[\underline{H},\overline{H}]} \overline{f}(v,\delta, H)\, dv \\ 
        &= \int_{n^{1-q}}^\infty \sup_{H\in[\underline{H},\overline{H}]} n^{-2H} v^{H-\frac{5}{2}}  v^{H-\frac{5}{2}} \, dv \\
        &= \int_{n^{1-q}}^\infty \sup_{H\in[\underline{H},\overline{H}]} (\tfrac{v}{n})^{2H} v^{-5} \, dv \\
        &\leq \int_{n^{1-q}}^\infty (\tfrac{v}{n})^{2\underline{H}} v^{-5} \, dv
        \;+\;  \int_{n^{1-q}}^\infty (\tfrac{v}{n})^{2\overline{H}} v^{-5} \, dv \\
        &\leq C n^{(1-q)(2\underline{H}-4)-2\underline{H}} + C n^{(1-q)(2\overline{H}-4)-2\overline{H}} \\
        & \leq C n^{-4(1-q)} 
        \qquad \leq C n^{-4(1-q)} \left(\tfrac{\delta}{n}\right)^{H_{i/n}-\frac{5}{2}}
        \qquad \leq C \delta^{H_{i/n}-\frac{5}{2}} n^{-2H_{i/n}-\frac{1}{2}}.
    \end{align*}
    The last inequality holds for $q>0$ sufficiently small.
    We have thus established that
    \begin{align*}
        & \left|\Cov(\chi_{i,n}, \chi_{j,n}) - \int_0^\infty f(v,\delta, \theta_{\frac{i}{n}})\, dv \right| \\
        &\leq C \log(n) n^{-\eta - 2H_{i/n}} \delta^{H_{i/n}-\frac{5}{2} + cn^{-q\eta}} \left[1+n^{(1-q)(H_{i/n}+\eta-\frac{3}{2})}\right] + C \delta^{H_{i/n}-\frac{5}{2}} n^{-2H_{i/n}-\frac{1}{2}} \\
        &\leq  C \log(n)\delta^{H_{i/n}-\frac{5}{2}} n^{-2H_{i/n}} n^{-(\eta\wedge \frac{1}{2})}.
    \end{align*}
    
    To conclude the proof, we observe that $\int_0^\infty f(v,\delta,\theta)\, dv$ is the lag-$\delta$ autocovariance of the second order increments at frequency $\frac{1}{n}$ of a fractional Brownian motion with parameters $\theta = (\sigma^2,H)$.
    Thus, 
    \begin{align*}
        \int_0^\infty f(v,\delta,\theta)\, dv 
        &= n^{-2H}\sigma^2\,\Gamma_H(i-j).
    \end{align*}
    In the derivations above, we assumed that $i>j$. 
    For $i=j$, the claim is a direct consequence of Lemma \ref{lem:bias}, with $\theta=\theta_{i/n}$.
\end{proof}

\begin{lemma}\label{lem:longrunvar}
    Let the conditions of Lemma \ref{lem:cov} hold, and define $Z_{i,n}=(\chi_{i,n}^2, \, (\tilde{\chi}_{i,n}^2)^T$.
    Then
    \begin{align*}
        \Cov(Z_{i,j}, Z_{j,n}) 
        &= n^{-4H_{i/n}} \sigma_{i/n}^4 \Sigma_{H_{i_n}}(i-j) + \mathcal{O}\left( \log(n) n^{-4H_{i/n}-(\eta \wedge \frac{1}{2})} (|i-j|\vee 1)^{2H_{i/n}-5} \right) \\
        &= \mathcal{O}\left(  n^{-4H_{i/n}} (|i-j|\vee 1)^{-3} \right),
    \end{align*}
    where
    \begin{align*}
        \Sigma_H(h)
        &:=2\begin{pmatrix}
            \Gamma_H(h)^2 & (\Gamma_H(h) + \Gamma_H(h+1))^2 \\  ( \Gamma_H(h) + \Gamma_H(h-1) )^2 & (2\Gamma_H(h) + \Gamma_H(h-1) + \Gamma_H(h+1))^2
        \end{pmatrix}, \quad h\in\Z.
    \end{align*}
    The bound holds uniformly for $\theta \in [0,\overline{\sigma}^2] \times [\underline{H},\overline{H}]$.
\end{lemma}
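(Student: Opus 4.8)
The plan is to reduce the $2\times2$ matrix $\Cov(Z_{i,n},Z_{j,n})$ to scalar covariances of the second-order increments $\chi_{\cdot,n}$, which are controlled by Lemma~\ref{lem:cov}. Two structural facts drive this. Since $\theta_v=(\sigma_v^2,H_v)$ is deterministic, the It\^o--mBm \eqref{eqn:def-Ito} is a centered Gaussian process, so $(\chi_{i,n},\widetilde{\chi}_{i,n},\chi_{j,n},\widetilde{\chi}_{j,n})$ is jointly centered Gaussian; and matching the coefficients of $X_{(i-m)/n}$ one obtains the exact identity $\widetilde{\chi}_{i,n}=\chi_{i,n}+2\chi_{i-1,n}+\chi_{i-2,n}$.

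First I would apply the Wick/Isserlis formula: for centered jointly Gaussian $A,B$ one has $\Cov(A^2,B^2)=2\,\Cov(A,B)^2$, so each of the four entries of $\Cov(Z_{i,n},Z_{j,n})$ is twice the square of a covariance between increments. Expanding the $\widetilde{\chi}$ factors through the identity turns every such covariance into a fixed finite linear combination of covariances $\Cov(\chi_{i-k,n},\chi_{j-l,n})$ with $k,l\in\{0,1,2\}$ (one or both indices shifted, depending on whether $\chi$ or $\widetilde{\chi}$ appears, with coefficients $c_0=c_2=1$, $c_1=2$). To each summand I apply Lemma~\ref{lem:cov} after sorting its two indices by symmetry of the covariance; in all cases the reference index and the shift of the argument are within a bounded number of grid points of $i/n$ and of $i-j$, so the lemma gives a leading term of the form $n^{-2H_{i'/n}}\sigma^2_{i'/n}\Gamma_{H_{i'/n}}(i-j+c)$ with $c$ a bounded integer, plus a remainder which, after routine consolidation, is of order $\log(n)\,n^{-2H_{i/n}-(\eta\wedge\frac12)}(|i-j|\vee1)^{H_{i/n}-5/2}$.

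Next comes the H\"older reduction to the single reference point $i/n$. Because the index shifts are $\mathcal{O}(1)$, $|H_{i'/n}-H_{i/n}|$ and $|\sigma^2_{i'/n}-\sigma^2_{i/n}|$ are $\mathcal{O}(n^{-\eta})$; the only factor that is $H$-sensitive on the logarithmic scale is $n^{-2H}$, for which $n^{-2H_{i'/n}}=n^{-2H_{i/n}}\bigl(1+\mathcal{O}(\log(n)\,n^{-\eta})\bigr)$, while $H\mapsto\Gamma_H(h)$ is Lipschitz uniformly in $h$ up to a logarithmic factor, $|\Gamma_{H'}(h)-\Gamma_H(h)|\lesssim|H'-H|\log(2+|h|)(|h|\vee1)^{2H-4}$, which follows by differentiating the finite sum defining $\Gamma_H$. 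Hence replacing $H_{i'/n},\sigma_{i'/n}$ by $H_{i/n},\sigma_{i/n}$ in the leading terms produces only extra errors of the same or smaller order than the remainder above, and the surviving leading part of each of $\Cov(\chi_{i,n},\widetilde{\chi}_{j,n})$, $\Cov(\widetilde{\chi}_{i,n},\chi_{j,n})$, $\Cov(\widetilde{\chi}_{i,n},\widetilde{\chi}_{j,n})$ becomes $n^{-2H_{i/n}}\sigma^2_{i/n}$ times a fixed polynomial combination of $\Gamma_{H_{i/n}}(i-j)$, $\Gamma_{H_{i/n}}(i-j\pm1),\ldots$; squaring these, by the Isserlis step, reproduces the entries of $n^{-4H_{i/n}}\sigma^4_{i/n}\Sigma_{H_{i/n}}(i-j)$.

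The one step requiring genuine care — the main obstacle — is the error accounting. Writing a scalar covariance as $G+E$ with $|G|\lesssim n^{-2H_{i/n}}(|i-j|\vee1)^{2H_{i/n}-4}$ and $E$ the remainder above, the matrix entries equal $2(G+E)^2=2G^2+4GE+2E^2$, and one must verify that $4GE+2E^2$ is dominated by $\log(n)\,n^{-4H_{i/n}-(\eta\wedge\frac12)}(|i-j|\vee1)^{2H_{i/n}-5}$. This comes down to a short list of elementary exponent inequalities: in $|i-j|$ one needs $(2H-4)+(H-\tfrac52)=3H-\tfrac{13}{2}\le 2H-5$ and $4H-8\le 2H-5$, i.e.\ $H\le\tfrac32$, which always holds; in $n$ one needs $(\log n)\,n^{-(\eta\wedge\frac12)}\to0$. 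The finitely many close pairs $|i-j|\le4$, where index orderings may clash before sorting, are still covered because the remainder's power $(|i-j|\vee1)^{H-5/2}$ changes by at most a bounded factor under such a reordering; alternatively, $\Cov(Z_{i,n},Z_{j,n})$ may be bounded there via Cauchy--Schwarz and Lemma~\ref{lem:bias}. Finally, the crude second bound $\mathcal{O}\bigl(n^{-4H_{i/n}}(|i-j|\vee1)^{-3}\bigr)$ is immediate: uniformly for $H\in[\underline H,\overline H]$ one has $|\Gamma_H(h)|\lesssim(|h|\vee1)^{2\overline H-4}$ with $2\overline H-4<-2$, so every entry of $\Sigma_H(i-j)$ is $\mathcal{O}((|i-j|\vee1)^{-4})$, while the error exponent satisfies $2H-5\le2\overline H-5<-3$.
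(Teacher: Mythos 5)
Your proposal is correct and follows essentially the same route as the paper: it exploits the identity $\widetilde{\chi}_{i,n}=\chi_{i,n}+2\chi_{i-1,n}+\chi_{i-2,n}$ to reduce everything to scalar covariances controlled by Lemma~\ref{lem:cov}, and then applies the Gaussian identity $\Cov(X^2,Y^2)=2\Cov(X,Y)^2$ (the paper's Lemma~\ref{lem:cov-squared}) entrywise. Your explicit error accounting for the cross term $4GE+2E^2$ and the treatment of small $|i-j|$ are details the paper leaves implicit, but the argument is the same.
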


\begin{proof}[Proof of Lemma \ref{lem:longrunvar}]
    Define $Y_{i,n} = (\chi_{i,n},\, \chi_{i,n}+2\chi_{i-1,n}+\chi_{i-2,n})^T = (\chi_{i,n},\, \tilde{\chi}_{i,n})^T$, and introduce the matrix.
    \begin{align*}
        \overline{\Gamma}_H(h) := \begin{pmatrix}
            \Gamma_H(h) & \Gamma_H(h) + \Gamma_H(h+1) \\  \Gamma_H(h) + \Gamma_H(h-1) & 2\Gamma_H(h) + \Gamma_H(h-1) + \Gamma_H(h+1)
        \end{pmatrix}.
    \end{align*}
    Via Lemma \ref{lem:cov}, we find that
    \begin{align*}
        \Cov(Y_{i,n}, Y_{j,n}) = n^{-2H_{i/n}} \sigma_{i/n}^2 \overline{\Gamma}_{H_{i/n}}(i-j) + \mathcal{O}\left( \log(n) n^{-2H_{i/n}-(\eta \wedge \frac{1}{2})} (|i-j|\vee 1)^{H_{i/n}-\frac{5}{2}} \right).
    \end{align*}
    Note that the matrix $\Sigma_H(h)$ is twice the entry-wise square of $\overline{\Gamma}_H(h)$, and that $|\overline{\Gamma}_H(h)| \asymp h^{2H-4}$.
    Hence, Lemma \ref{lem:cov-squared} yields 
    \begin{align*}
        \Cov(Z_{i,j}, Z_{j,n}) &= n^{-4H_{i/n}} \sigma_{i/n}^4 \Sigma_{H_{i_n}}(i-j) + \mathcal{O}\left( \log(n) n^{-4H_{i/n}-(\eta \wedge \frac{1}{2})} (|i-j|\vee 1)^{2H_{i/n}-5} \right).
    \end{align*}
\end{proof}

\begin{lemma}\label{lem:cov-squared}
    For two centered, jointly Gaussian random variables $X,Y$, it holds $\Cov(X^2, Y^2) = \Cov(X,Y)^2 \frac{\Var(X^2)}{\Var(X)^2} = 2\Cov(X,Y)^2$.
\end{lemma}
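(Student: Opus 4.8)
The plan is to reduce everything to the standard moment formulas for Gaussian vectors. The cleanest route is via Isserlis' theorem (Wick's formula): for centered jointly Gaussian $(X_1,X_2,X_3,X_4)$ one has $\E[X_1X_2X_3X_4] = \E[X_1X_2]\E[X_3X_4] + \E[X_1X_3]\E[X_2X_4] + \E[X_1X_4]\E[X_2X_3]$. Specializing to $X_1=X_2=X$ and $X_3=X_4=Y$ gives $\E[X^2Y^2] = \E[X^2]\E[Y^2] + 2\E[XY]^2$. Since $X$ and $Y$ are centered, $\E[XY]=\Cov(X,Y)$ and $\E[X^2]\E[Y^2]=\Var(X)\Var(Y)$, whence $\Cov(X^2,Y^2) = \E[X^2Y^2]-\E[X^2]\E[Y^2] = 2\Cov(X,Y)^2$, which is the second claimed equality.

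For the first equality I would apply the same specialization with $X_1=X_2=X_3=X_4=X$, obtaining $\E[X^4]=3\E[X^2]^2 = 3\Var(X)^2$, hence $\Var(X^2) = \E[X^4]-\E[X^2]^2 = 2\Var(X)^2$. Therefore $\Var(X^2)/\Var(X)^2 = 2$ whenever $\Var(X)>0$, and combining this with the previous identity yields $\Cov(X^2,Y^2) = \Cov(X,Y)^2\,\Var(X^2)/\Var(X)^2$.

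If one prefers to avoid quoting Isserlis, the identity $\E[X^2Y^2] = \Var(X)\Var(Y) + 2\Cov(X,Y)^2$ can be derived directly by conditioning: assuming $\Var(X)>0$, write $Y = cX + W$ with $c = \Cov(X,Y)/\Var(X)$ and $W \sim \Normal(0,\Var(Y)-c^2\Var(X))$ independent of $X$; expanding $Y^2 = c^2X^2 + 2cXW + W^2$ and using $\E[X^3]=0$, $\E[W]=0$, and the Gaussian fourth-moment identity $\E[X^4]=3\Var(X)^2$ gives the claim after a one-line computation, and the formula for $\Var(X^2)$ follows by setting $Y=X$.

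There is essentially no obstacle here. The only point requiring a word of care is the degenerate case $\Var(X)=0$, where $X\equiv 0$ almost surely so that both sides of $\Cov(X^2,Y^2)=2\Cov(X,Y)^2$ vanish trivially and the ratio $\Var(X^2)/\Var(X)^2$ is to be read as the constant $2$; and, of course, the appeal to $\E[X^4]=3\Var(X)^2$ for a centered Gaussian, which is classical.
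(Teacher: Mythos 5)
Your proof is correct. Your primary route via Isserlis' (Wick's) formula is a genuinely different argument from the paper's: the paper instead writes $Y = aX + Z$ with $Z$ centered Gaussian and independent of $X$ (the regression decomposition, with $a = \Cov(X,Y)/\Var(X)$), expands $\Cov(X^2,(aX+Z)^2) = a^2\Var(X^2) + 2a\,\E(X^3Z) + \Cov(X^2,Z^2) = a^2\Var(X^2)$ using independence and $\E(X^3)=0$, and reads off both equalities at once --- which is exactly the alternative you sketch in your third paragraph. The Wick route is slightly more self-contained in that it delivers $\E[X^2Y^2]=\Var(X)\Var(Y)+2\Cov(X,Y)^2$ and $\E[X^4]=3\Var(X)^2$ from one formula and needs no case distinction on $\Var(X)$ for the second equality; the decomposition route is arguably more elementary (no fourth-moment pairing count beyond $\E[X^4]=3\Var(X)^2$) and makes the intermediate expression $\Cov(X,Y)^2\,\Var(X^2)/\Var(X)^2$ appear naturally, which is presumably why the lemma is stated in that form. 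Your remark on the degenerate case $\Var(X)=0$ is a point of care the paper's proof glosses over.
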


\begin{proof}
    Write $(X,Y) = (X, aX+Z)$ for $a=\Cov(X,Y)$, and $Z$ centered Gaussian, independent form $X$.
    Observe that for independent centered Gaussian random variables $X,Z$, and $a\in\R$, we have $\Cov(X^2, (aX+Z)^2) = a^2\Var(X^2) + 2a\E(X^3Z)+\Cov(X^2,Z^2) = a^2\Var(X^2)$, and $\Cov(X, aX+Z)=a\Var(X)$. 
\end{proof}

\begin{lemma}\label{lem:cov-log}
    There exists a universal $K$ such that for any two centered, jointly Gaussian random variables $X,Y$, with $\mathrm{Cor}(X,Y)=\rho$, \[\left|\Cov(\log(X^2), \log(Y^2))\right| \leq K |\rho| .\]
\end{lemma}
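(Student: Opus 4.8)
The plan is to standardize and then apply the Hermite expansion together with Mehler's formula. Since $\log(cX)^2=\log c^2+\log X^2$ differs from $\log X^2$ only by an additive constant, replacing $X$ by $X/\sqrt{\Var X}$ and $Y$ by $Y/\sqrt{\Var Y}$ affects neither $\Cov(\log X^2,\log Y^2)$ nor the correlation $\rho$; so I would assume at the outset that $X,Y$ are standard normal with $\E[XY]=\rho\in[-1,1]$.

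The key observation is that $f(x):=\log x^2=2\log|x|$, although it blows up at the origin, lies in $L^2(\gamma)$ for $\gamma$ the standard Gaussian measure, since $f(x)^2$ has only a logarithmic singularity there, integrable against the bounded Gaussian density. Expanding $f=\sum_{k\ge0}c_k H_k$ in the Hermite polynomials $H_k$ (normalized so that $\E H_k(X)^2=k!$) gives $\sum_{k\ge0}c_k^2 k!=\|f\|_{L^2(\gamma)}^2<\infty$, and hence $\Var(\log X^2)=\sum_{k\ge1}c_k^2 k!<\infty$ (this equals $\psi'(1/2)=\pi^2/2$, the trigamma function at $1/2$, as $\log X^2$ is the logarithm of a $\chi^2_1$ variable, but only finiteness will be used).

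By Mehler's formula, $\E[f(X)f(Y)]=\sum_{k\ge0}c_k^2 k!\,\rho^k$, the series converging absolutely since $\sum_k c_k^2 k!<\infty$ and $|\rho|\le1$; subtracting the means removes the $k=0$ term, so
\[
\Cov(\log X^2,\log Y^2)=\sum_{k\ge1}c_k^2 k!\,\rho^k .
\]
As $|\rho|\le1$ gives $|\rho^k|\le|\rho|$ for all $k\ge1$, this yields $|\Cov(\log X^2,\log Y^2)|\le|\rho|\sum_{k\ge1}c_k^2 k!=|\rho|\Var(\log X^2)$, i.e.\ the claim with the universal constant $K=\Var(\log X^2)=\pi^2/2$. (Since $f$ is even one has $c_k=0$ for odd $k$, so the same argument in fact delivers the sharper $|\Cov|\le\rho^2\Var(\log X^2)$, but the stated linear bound is all that is needed.)

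I expect no genuinely hard step: the whole content is the square-integrability of $\log X^2$ against the Gaussian, after which the Hermite machinery automatically produces a bound of order $|\rho|$. The only alternative I considered — differentiating $\rho\mapsto\E[\log X^2\log Y^2]$ via Price's theorem $\partial_\rho\phi_\rho=\partial_x\partial_y\phi_\rho$ and integrating by parts, which would give $\tfrac{d}{d\rho}\E[\log X^2\log Y^2]=4\,\E[(XY)^{-1}]$ — runs into the real obstacle of bounding the singular integral $\E[(XY)^{-1}]$ uniformly in $\rho$, so I would discard it in favour of the Hermite route, which avoids that issue completely.
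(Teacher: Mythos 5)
Your proof is correct and follows essentially the same route as the paper: standardize, expand $\log(x^2)$ in Hermite polynomials, use $\Cov(H_l(X),H_k(Y))=\rho^k k!\,\mathds{1}(l=k)$ to get $\Cov=\sum_{k\ge1}c_k^2k!\,\rho^k$, and bound by $|\rho|\sum_k c_k^2 k!<\infty$. Your additional remarks (the explicit constant $\pi^2/2$ and the sharper $\rho^2$ bound from evenness) are correct but not needed.
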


\begin{proof}
    Since this specific covariance is invariant to rescaling of $X$ and $Y$, we may assume both are standard normal, with correlation $\rho$. 
    We proceed similar to the proof of \cite[Lemma~8.5]{shen2020}.

    Denote by $H_l:\R\to\R$ the $l$-th Hermite polynomial (i.e.\ of degree $l$), and decompose $\log(x^2) = \sum_{l=0}^\infty c_l H_l(x)$. 
    Because $\log(X^2)\in L_2(P)$, the sequence $c_l$ is square-summable.
    Moreover,
    \begin{align*}
        \Cov(\log(X^2), \log(Y^2)) 
        &= \sum_{k,l=0}^\infty c_l c_k \Cov(H_l(X), H_k(Y)).
    \end{align*}
    Now use $\Cov(H_l(X), H_k(Y)) = \rho^k k! \mathds{1}(l=k)$ \citep[S.3.4]{shen2020} to find that
    \begin{align*}
        \Cov(\log(X^2), \log(Y^2)) 
        &= \sum_{k=1}^\infty c_k^2 \rho^k k!.
    \end{align*}
    Because $\Cov(\log(X^2), \log(X^2)) < \infty$, corresponding to $\rho=1$, we conclude that $\sum_k c_k^2 k! <\infty$.
    This yields $|\Cov(\log(X^2), \log(Y^2))| \leq |\rho| \sum_{k} c_k^2 k!$.
\end{proof}

\subsection{Local nonparametric estimation}

The error of $\widehat{\phi}_n(u)$ admits the following asymptotic representation.

\begin{proposition}\label{prop:NW}
    Suppose that $v\mapsto \theta_v = (\sigma_v, H_v)$ is Hölder continuous with exponent $\eta\in(0,l+1]$, and that $0<\underline{H} \leq H_v \leq \overline{H} < 1$ for all $v$, and $\sigma_v^2 \leq \overline{\sigma}^2<\infty$.
    If $b_n \ll n^{q-1}$ for some $q\in(0,1)$, then for any $p\geq 2$,
    \begin{align*}
        n^{2H_u} \widehat{\phi}_n(u) = \sigma_u^2 \Gamma_{H_u}(0) \cdot \begin{pmatrix}1 \\ 2^{2H_u} \end{pmatrix} + \mathcal{O}\left( \log(n)^{\lceil \eta\rceil} b_n^{\eta} \right) + \mathcal{O}_{L_p}\left( \frac{1}{\sqrt{n \,b_n}} \right) + \mathcal{O}\left( \log(n) n^{-(1\wedge \eta)} \right)
    \end{align*}
    For any $p$, the bound holds uniformly for $\theta \in [0,\overline{\sigma}^2] \times [\underline{H},\overline{H}]$, and for all $(b_n,u)$ such that $u\in[b_n,1-b_n]$.
\end{proposition}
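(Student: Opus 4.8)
The plan is to decompose $\widehat{\phi}_n(u)$ into its expectation and a centered stochastic fluctuation and to treat the two separately. Write $Z_{i,n}=(\chi_{i,n}^2,\widetilde{\chi}_{i,n}^2)^T$, so that $\widehat{\phi}_n(u)=\sum_{i=1}^n w_{i,n}(u)\,\E Z_{i,n}+\sum_{i=1}^n w_{i,n}(u)(Z_{i,n}-\E Z_{i,n})=:\widehat{\phi}_n^{\mathrm b}(u)+\widehat{\phi}_n^{\mathrm s}(u)$. Throughout I use only the weight properties (i)--(iv), which hold uniformly for $u\in[b_n,1-b_n]$, together with $b_n\ll n^{q-1}$; the latter forces $b_n^{\eta\wedge 1}\log n\to 0$, hence $n^{2|H_u-H_{i/n}|}=\mathcal O(1)$ and $n^{4|H_u-H_{i/n}|}=\mathcal O(1)$ uniformly over the support $\{i:|\tfrac{i}{n}-u|\le b_n\}$ of the weights. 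All $\mathcal O$-terms below are uniform over $u$ and over the parameter class.

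For the bias part, put $g_1(v)=n^{2H_u-2H_v}\sigma_v^2\Gamma_{H_v}(0)$, $g_2(v)=2^{2H_v}g_1(v)$, and $g=(g_1,g_2)^T$, so that $g(u)=\sigma_u^2\Gamma_{H_u}(0)\,(1,2^{2H_u})^T$ is the claimed leading term. Applying Lemma \ref{lem:bias} with base point $\theta_{i/n}$ and Hölder exponent $1\wedge\eta$ (the lemma requires exponent $\le 1$) to $\chi_{i,n}$, and the verbatim argument of that lemma — now with the double-spacing second-order-increment kernel, whose stationary integral equals $2^{2H}\sigma^2\Gamma_H(0)$ by the change-of-frequency identity — to $\widetilde{\chi}_{i,n}$, yields $n^{2H_u}\E Z_{i,n}=g(\tfrac{i}{n})+\mathcal O(\log(n)\,n^{-(1\wedge\eta)})$ for $|\tfrac{i}{n}-u|\le b_n$. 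Summing against the weights and using $\sum_i|w_{i,n}(u)|\le C$ absorbs this error into the third term of the assertion, so it remains to estimate $\sum_i w_{i,n}(u)g(\tfrac{i}{n})-g(u)$. For this I Taylor-expand each $g_m$ about $u$ to degree $k:=\lceil\eta\rceil-1\le l$ (legitimate since $g_m$ is $\eta$-Hölder, hence $k$-times differentiable with $(\eta-k)$-Hölder $k$-th derivative); by property (iv) the weights reproduce this polynomial, so the difference equals $\sum_i w_{i,n}(u)R_m(\tfrac{i}{n})$ with $R_m$ the degree-$k$ remainder. Writing $g_m=e^{\rho}\psi_m$ with $\rho(v)=2(H_u-H_v)\log n$ and $\psi_m$ an $n$-independent $\eta$-Hölder function, one checks that $e^{\rho}=\mathcal O(1)$ on the support and that each derivative of $e^{\rho}$ produces a factor $\log n$ (since $\rho^{(m)}=\mathcal O(\log n)$ by the chain rule), whence the $(\eta-k)$-Hölder seminorm of $g_m^{(k)}$ is $\mathcal O((\log n)^{\lceil\eta\rceil})$ on the support — the power $\lceil\eta\rceil=k+1$ arising from $k$ derivatives plus one further $\log n$ through the Hölder seminorm of the exponential factor. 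Hence $|R_m(v)|\le C(\log n)^{\lceil\eta\rceil}|v-u|^\eta\le C(\log n)^{\lceil\eta\rceil}b_n^\eta$ there, and $\sum_i|w_{i,n}(u)|\le C$ gives the first error term.

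For the stochastic part, since $v\mapsto\theta_v$ is deterministic the process $X$ is Gaussian, so $(\chi_{i,n})_i$ and $(\widetilde{\chi}_{i,n})_i$ are jointly Gaussian and each coordinate of $n^{2H_u}\widehat{\phi}_n^{\mathrm s}(u)$ is a deterministically weighted sum of centered squares of Gaussians, hence lies in the second Wiener chaos of $B$. By hypercontractivity on the second chaos, $\|F\|_{L_p}\le(p-1)\|F\|_{L_2}$ for such $F$ and $p\ge 2$, so it suffices to bound the variance. By Lemma \ref{lem:longrunvar} one has $n^{4H_u}\Cov(Z_{i,n},Z_{j,n})=\mathcal O((|i-j|\vee 1)^{-3})$ on the support, and then properties (i), (ii) and summability of $h\mapsto(|h|\vee 1)^{-3}$ give $\Var(n^{2H_u}\widehat{\phi}_n^{\mathrm s}(u))\le C\,(\max_i|w_{i,n}(u)|)(\sum_j|w_{j,n}(u)|)\sum_h(|h|\vee1)^{-3}=\mathcal O((nb_n)^{-1})$, i.e.\ $n^{2H_u}\widehat{\phi}_n^{\mathrm s}(u)=\mathcal O_{L_p}((nb_n)^{-1/2})$. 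Assembling the three contributions proves the claim, and the asserted uniformity is inherited from that of the weight bounds and of Lemmas \ref{lem:bias} and \ref{lem:longrunvar}.

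The step requiring genuine care, rather than routine estimation, is pinning down the exact power $\lceil\eta\rceil$ of $\log n$ in the bias: one must combine the standard local-polynomial bias analysis — which, because $l\ge k=\lceil\eta\rceil-1$, turns $\eta$-smoothness into bias of order $b_n^\eta$ — with a careful accounting of how the factor $n^{2H_u-2H_v}$, unavoidable because $H$ sits in the exponent, contributes one $\log n$ per derivative plus one more through the Hölder seminorm of the top-order term. A secondary point is that Lemma \ref{lem:bias} covers only $\eta\le 1$, so for $\eta>1$ it must be invoked with exponent $1$, which is exactly what produces the $n^{-(1\wedge\eta)}$ term rather than $n^{-\eta}$, and one must verify that $\widetilde{\chi}_{i,n}$ obeys the corresponding estimate.
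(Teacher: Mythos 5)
Your proposal is correct and follows essentially the same route as the paper: split $\widehat{\phi}_n(u)$ into mean and centered fluctuation, apply Lemma \ref{lem:bias} and a Taylor expansion of $v\mapsto n^{2(H_u-H_v)}\sigma_v^2\Gamma_{H_v}(0)$ against the moment-vanishing property (iv) of the local-polynomial weights for the bias, and combine Lemma \ref{lem:longrunvar} with weight properties (i)--(iii) plus second-Wiener-chaos hypercontractivity for the $L_p$ stochastic term. If anything, you supply more detail than the paper on the bookkeeping of the $\log(n)^{\lceil\eta\rceil}$ factor and on extending Lemma \ref{lem:bias} to the double-spacing increments $\widetilde{\chi}_{i,n}$, both of which the paper leaves implicit.
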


\begin{proof}[Proof of Proposition \ref{prop:NW}]
    Lemma \ref{lem:bias} yields
    \begin{align*}
        \E\left(  \widehat{\phi}_{n,1}(u) \right) 
        = \sum_{i=1}^n w_{i,n} \E \chi_{i,n}^2 
        &= \sum_{i=1}^n w_{i,n} n^{-2H_{i/n}} \sigma_{i/n}^2 \Gamma_{H_{i/n}}(0)  +  \mathcal{O}\left(\log(n) n^{-(1\wedge\eta)-2H_u}\right), \\
        \leadsto \quad \E\left( n^{2H_u} \widehat{\phi}_{n,1}(u) \right)
        &= \sum_{i=1}^n w_{i,n} n^{2(H_u-H_{i/n})} \sigma_{i/n}^2 \Gamma_{H_{i/n}}(0)  +  \mathcal{O}\left(\log(n) n^{-(1\wedge\eta)}\right) .
    \end{align*}
    A Taylor expansion of the function $\mu_n(s)= n^{2(H_u-H_s)} \sigma_s^2 \Gamma_{H_s}(0)$, together with property (iv) of the weights, yields,
    \begin{align*}
        \E\left( n^{2H_u} \widehat{\phi}_{n,1}(u) \right) 
        &= \mu_n(u) + \mathcal{O}\left(b_n^\eta \log(n)^{\lceil \eta\rceil}\right) + \mathcal{O}\left( \log(n) n^{-(1\wedge \eta)} \right)
    \end{align*}
    The same bound holds for $\widehat{\phi}_{n,2}$, hence
    \begin{align*}
        \E\left( n^{2H_u} \widehat{\phi}_n(u) \right) &= \sigma_u^2 \Gamma_{H_u}(0) \cdot \begin{pmatrix}1 \\ 2^{2H_u} \end{pmatrix} +  \mathcal{O}\left(\log(n) n^{-(1\wedge\eta)}\right) + \mathcal{O}\left(b_n^\eta \log(n)^{\lceil \eta\rceil}\right),
    \end{align*}
    and the bound holds uniformly in $u\in(0,1)$.
    
    To bound the stochastic term, denote $Z_{i,n} = (\chi_{i,n}^2, \tilde{\chi}_{i,n}^2)^T$ and observe
    \begin{align*}
         \left\| \Cov\left(n^{2H_u} \widehat{\phi}_{n}(u)\right) \right\| 
        &\leq  n^{4H_u}\sum_{i,j=1}^n w_{i,n}(u) w_{j,n}(u)\left\| \Cov(Z_{i,n}, Z_{j,n}) \right\| \\
        &\leq \frac{C^2 n^{4H_u}}{n^2 \, b_n^2} \sum_{i=1}^n \sum_{j=1}^n \mathds{1}_{|\frac{i}{n}-u|\leq b_n} \mathds{1}_{|\frac{j}{n}-u|\leq b_n} \|\Cov(Z_{i,n}, Z_{j,n})\|.
    \end{align*}
    For $|\frac{i}{n}-u|\leq b_n \ll n^{1-q}$, and $|\frac{j}{n}-u|\ll n^{1-q}$, by virtue of Lemma \ref{lem:longrunvar},
    \begin{align*}
        \|\Cov(Z_{i,n}, Z_{j,n}) \|
        &\leq C (|i-j|\vee 1)^{-3} n^{-4H_{u} + C n^{-q}}\\
        &\leq C (|i-j|\vee 1)^{-3} n^{-4H_{u}}.
    \end{align*}
    Thus,
    \begin{align*}
        \left\|\Cov\left(n^{2H_u} \widehat{\phi}_{n}(u)\right) \right\|
        &\leq C \frac{1}{n^2b_n^2} \sum_{i,j=1}^n \mathds{1}_{|\frac{i}{n}-u|\leq b_n} \mathds{1}_{|\frac{j}{n}-u|\leq b_n} (|i-j|\vee 1)^{-3} \\
        &\leq C \frac{1}{n^2b_n^2} \sum_{i=\lfloor n(u-b_n)\rfloor \vee 1}^{\lceil n(u+b_n)\rceil \wedge n} \sum_{j=-\infty}^\infty (|i-j|\vee 1)^{-3} 
        \qquad \leq \frac{C}{n\, b_n}.
    \end{align*}
    This establishes the upper bound on the stochastic term of $\widehat{\phi}_{n}(u)$ in $L_2$. 
    To obtain the bound in $L_p$, we observe that $\widehat{\phi}_n(u)$ belongs to the second Wiener chaos, such that all $L_p$ norms are equivalent \citep[2.8.14]{nourdin2012a}. 
\end{proof}

\begin{proof}[Proof of Theorem \ref{thm:rate}]
    Proposition \ref{prop:NW} yields
    \begin{align*}
        \widehat{H}_n(u) 
        &= \frac{1}{2} \log_2 \left(  \frac{2^{2H_u}\sigma^2 \Gamma_{H_u}(0) + \mathcal{O}(\log(n)^{\lceil \eta\rceil}b_n^\eta) + \mathcal{O}_{L_p}(1/\sqrt{n b_n}) + \mathcal{O}(\log(n) n^{-(1\wedge \eta)})  }{\sigma^2 \Gamma_{H_u}(0) + \mathcal{O}(\log(n)^{\lceil \eta\rceil}b_n^\eta) + \mathcal{O}_{L_p}(1/\sqrt{n b_n}) + \mathcal{O}(\log(n) n^{-(1\wedge \eta)})} \right) \\
        &= \frac{1}{2} \log_2 \left( 2^{2H_u}  + \mathcal{O}(\log(n)^{\lceil \eta\rceil}b_n^\eta) + \mathcal{O}_{L_p}(1/\sqrt{n b_n}) +\mathcal{O}(\log(n) n^{-(1\wedge \eta)}) \right) \\
        &= H_u +  \mathcal{O}(\log(n)^{\lceil \eta\rceil}b_n^\eta) + \mathcal{O}_{L_p}(1/\sqrt{n b_n}) +\mathcal{O}(\log(n) n^{-(1\wedge \eta)})
        \intertext{and for the optimal choice of $b_n$,}
        &= H_u + \mathcal{O}_{L_p}\left( \log(n)^{\frac{\lceil \eta\rceil}{2\eta+1}} n^{-\frac{\eta}{2\eta+1}} \right).
    \end{align*}
\end{proof}
\begin{proof}[Proof of Theorem \ref{thm:rate2}]
    Denote $a_{i,n}=\E \chi_{i,n}^2$ and $\tilde{a}_{i,n} = \E \widetilde{\chi}_{i,n}^2$. 
    Then $\chi_{i,n}^2 \deq a_{i,n} Z^2$ for $Z\sim \mathcal{N}(0,1)$, and $\widetilde{\chi}_{i,n}^2 \deq \tilde{a}_{i,n} Z^2$, so that Lemma \ref{lem:bias} yields
    \begin{align*}
        \E \log_2\left( \tfrac{\widetilde{\chi}_{i,n}^2}{\chi_{i,n}^2} \right) 
        &= \log_2 \left(\tfrac{\tilde{a}_{i,n}}{a_{i,n}}\right) + \left[ \E \log_2(Z^2) - \E \log_2(Z^2)\right] 
        = \log_2 \left(\tfrac{\tilde{a}_{i,n}}{a_{i,n}}\right)\\
        &= 2 H_{i/n}+ \log_2 \left(\tfrac{(n/2)^{2H_{i/n}}\tilde{a}_{i,n}}{n^{2H_{i/n}}a_{i,n}}\right) \\
        &= 2H_{i/n} + \log_2 \left( \frac{\sigma^2 \Gamma_{H_{i/n}}(0) + \mathcal{O}(\log(n) n^{-(\eta \wedge 1)})}{\sigma^2 \Gamma_{H_{i/n}}(0) + \mathcal{O}(\log(n) n^{-(\eta \wedge 1)})} \right) \\
        &= 2H_{i/n} + \log_2 \left( 1 + \mathcal{O}(\log(n) n^{-(\eta \wedge 1))} \right) \\
        &= 2 H_{i/n} + \mathcal{O}\left( \log(n) n^{-(\eta \wedge 1)} \right).
    \end{align*}
    Thus, by standard bias bounds for local polynomial estimators, we obtain the bias bound $\E(\widehat{H}_n^{\dagger}(u)) = H(u) + \mathcal{O}(b_n^{\eta} + \log(n) n^{-(\eta\wedge 1)})$.

    To control the variance, we use Lemma \ref{lem:cov-log} to find that
    \begin{align*}
        \Var \left( \sum_{i=1}^n w_{i,n}(u) \log_2(\chi^2_{i,n}) \right) 
        & \leq K \sum_{i,j=1}^n |w_{i,n}(u) w_{j,n}(u)| \cdot \left|\text{Cor}(\chi_{i,n}, \chi_{j,n}) \right| .
    \end{align*}
    In the following, $K$ is a generic constant which may vary from line to line.
    Lemma \ref{lem:cov} yields
    \begin{align*}
        \left|\text{Cor}(\chi_{i,n}, \chi_{j,n}) \right|
        &\leq K|i-j|^{2\overline{H}-4} + K |i-j|^{\overline{H}-\frac{5}{2}} \quad \leq K |i-j|^{-\frac{3}{2}},
    \end{align*}
    which is summable.
    In combination with the boundedness and finite support of the weights $w_{i,n}(u)$, we find 
    \begin{align*}
        \Var \left( \sum_{i=1}^n w_{i,n}(u) \log_2(\chi^2_{i,n}) \right) 
        &\leq K \sum_{i,j=\floor n(u-b_n)\rfloor}^{\lceil n(u+b_n)\rceil} w_{i,n}(u) w_{j,n}(u) (|i-j|+ 1)^{-\frac{3}{2}} \\
        &\leq \frac{K}{n b_n} \sum_{h=-\infty}^\infty (|h|+1)^{-\frac{3}{2}} \quad = \mathcal{O}(1/(nb_n)).
    \end{align*}
    Similarly, $\Var \left( \sum_{i=1}^n w_{i,n}(u) \log_2(\chi^2_{i,n}) \right) = \mathcal{O}(1/(nb_n))$, which in particular yields
    \begin{align*}
        \Var \left(\widehat{H}^\dagger_n(u)\right) = \mathcal{O}\left(\tfrac{1}{nb_n}\right).
    \end{align*}
\end{proof}

\subsection{Integrated parameter estimation}

\begin{proof}[Proof of Theorem \ref{thm:integrated-clt}]
    \underline{Step (i):}
    Denote $t_0 = 2L_n$.
    We may write the estimator $\widehat{\mathcal{H}}(u)$ equivalently as
    \begin{align*}
        \widehat{\mathcal{H}}(u)
        &= \frac{1}{n} \sum_{t=2L}^{\lfloor un\rfloor } \left\{ m(\hat{\psi}_{t,n} ) + Dm(\hat{\psi}_{t,n} ) \cdot (  Z_{t,n} - \hat{\psi}_{t,n}  )   \right\}, 
    \end{align*}
    with 
    \begin{align*}
        m&:(0,\infty)^2\to \R, \;(x,y) \mapsto \left[\tfrac{1}{2} \log(\tfrac{y}{x})\right]\;\vee 0 \;\wedge 1, \\
        Z_{t,n} &= \frac{n^{2H_{t/n}}}{\sigma_{t/n}^2}\begin{pmatrix}
            \chi_{t,n}^2 \\ \tilde{\chi}_{t,n}^2
        \end{pmatrix}, \qquad \psi_{u}= \Gamma_{H_u}(0) \cdot \begin{pmatrix}
            1 \\ 2^{2H_{u}}
        \end{pmatrix}, \qquad
        \psi_{t,n} = \psi_{t/n} \\
        \hat{\psi}_{t,n} &= \frac{n^{2H_{t/n}}}{\sigma_{t/n}^2} \widehat{\phi}(\tfrac{t-L_n}{n}) \text{ for } t=2L,\ldots, n, \text{ and } \hat{\psi}_{t,n}=\psi_{t,n} \text{ for } t=1,\ldots, 2L-1.
    \end{align*}
    Due to the $L_q$ bound of Lemma \ref{prop:NW}, we have for any $q\geq 2$ and $t= L+\lceil n b_n\rceil,\ldots, n$,
    \begin{align*}
        \|\hat{\psi}_{t,n}-\psi_{t-L,n}\|_{L_q}^2 
        &= \mathcal{O}\left(\log(n)^{2\lceil \eta\rceil} b_n^{2\eta} + \tfrac{1}{n b_n} + \log(n)^2 n^{-2(1\wedge \eta)}\right)
        \;\leq\;\mathcal{O}\left(\log(n)^{2\lceil \eta\rceil} n^{-\frac{1}{2}-r}\right).
    \end{align*} 
    Furthermore, for $t=2L,\ldots, L+\lceil n b_n\rceil$, the local polynomial estimator effectively uses the smaller bandwidth $\frac{t-L}{n}\geq \frac{L}{n}$, see Remark \ref{rem:boundary}.
    Hence
       \begin{align*}
        \|\hat{\psi}_{t,n}-\psi_{t-L,n}\|_{L_q}^2 
        &= \mathcal{O}\left(\log(n)^{2\lceil \eta\rceil} (\tfrac{t-L_n}{n})^{2\eta} + \tfrac{1}{t-L_n} + \log(n)^2 n^{-2(1\wedge \eta)}\right).
    \end{align*} 
    Moreover, $\|\psi_{t-L,n} - \psi_{t,n}\|^2 = \mathcal{O}((L_n/n)^{2\eta})$, and hence $\| \max_{t=2L,\ldots, n} |\hat{\psi}_{t,n}-\psi_{t,n} | \|_{L_q} = \mathcal{O}(n^{\frac{1}{q} - \epsilon}) + \mathcal{O}((L_n/n)^{\eta})$, for some $\epsilon>0$, which tends to zero for $q$ large enough.
    Using the assumptions on $b_n$, we find
    \begin{align*}
        \frac{1}{n} \sum_{t=2L}^n \|\hat{\psi}_{t,n} - \psi_{t,n}\|_{L_q}^2 
        & \leq C \left( \log(n)^{2\lceil \eta\rceil} n^{-\frac{1}{2}-r} \right) 
        + \frac{C}{n} \sum_{t=2L}^{L+\lceil n b_n \rceil} \left[\log(n)^{2\lceil \eta\rceil} \left(\tfrac{t-L}{n}\right)^{2\eta} + \frac{1}{t-L} \right] \\
        &\leq C \left( \log(n)^{2\lceil \eta\rceil} n^{-\frac{1}{2}-r} \right) 
        + C b_n^{1+2\eta} \log(n)^{2\lceil \eta\rceil} + C \frac{\log(n)}{n} \\
        &= \mathcal{O}\left( \log(n)^{2\lceil \eta\rceil} n^{-\frac{1}{2}-r}\right).
    \end{align*}
    Thanks to the uniform convergence, we may restrict our attention to the event
    \begin{align*}
        A_n=\left\{ \frac{1}{2} \psi_{t,n} \leq \hat{\psi}_{t,n} \leq 2 \psi_{t,n}, \qquad \forall \, t=1,\ldots,n \right\},
    \end{align*}
    as $P(A_n)\to 1$ as $n\to\infty$.
    For this event, a Taylor expansion of $m$ around $\hat{\psi}_{t,n}$ yields
    \begin{align}
        &\widehat{\mathcal{H}}(u) - \frac{1}{n} \sum_{t=1}^{\lfloor un\rfloor} m(\psi_{t,n}) \nonumber \\
        &=\frac{1}{n} \sum_{t=1}^{\lfloor un\rfloor } \left\{ m(\hat{\psi}_{t,n} ) + Dm(\hat{\psi}_{t,n} ) \cdot (  Z_{t,n} - \hat{\psi}_{t,n}  )   \right\} - \frac{1}{n}\sum_{t=1}^{\lfloor un\rfloor} m(\psi_{t,n}) + \mathcal{O}_P\left( \tfrac{L_n}{n} \right) \nonumber\\
        &= \frac{1}{n} \sum_{t=1}^{\lfloor un\rfloor} Dm(\hat{\psi}_{t,n})\cdot (Z_{t,n}-\psi_{t,n}) + \mathcal{O}_P\left(\frac{1}{n} \sum_{t=1}^n \|\hat{\psi}_{t,n} - \psi_{t,n}\|^2\right) + \mathcal{O}_P\left( \tfrac{L_n}{n} \right) \label{eqn:integrated-clt-12} \\
        &= \frac{1}{n} \sum_{t=1}^{\lfloor un\rfloor} Dm(\hat{\psi}_{t,n})\cdot (Z_{t,n}-\psi_{t,n}) + \mathcal{O}_P\left(\log(n)^{2\lceil \eta\rceil} n^{-\frac{1}{2}-r}\right) + \mathcal{O}_P\left( \tfrac{L_n}{n} \right) , \label{eqn:integrated-clt-12a} 
    \end{align}
    using in \eqref{eqn:integrated-clt-12} that the second derivative $D^2m(\psi)$ is bounded in the specified neighborhood of $\psi_{t,n}$, and the special definition $\hat{\psi}_{t,n} = \psi_{t,n}$ for $t=1,\ldots, 2L-1$. 
    By our assumption on $L_n$, the remainder terms in \eqref{eqn:integrated-clt-12a} are of order $o_P(1/\sqrt{n})$.
    Moreover, the approximation \eqref{eqn:integrated-clt-12} holds uniformly in $u\in[0,1]$.

    Next, note that the Hölder regularity of $H_v$ with exponent $\eta$ yields
    \begin{align*}
        \frac{1}{n} \sum_{t=1}^{\lfloor un\rfloor} m(\psi_{t,n}) 
        = \frac{1}{n} \sum_{t=1}^{\lfloor un\rfloor} H_{t/n} 
        = \int_0^u H_v\, dv + \mathcal{O}(n^{-\eta}),
    \end{align*}
    uniformly in $u\in[0,1]$. 
    Since $\eta>\frac{1}{2}$ by assumption, we have shown that
    \begin{align*}
        \sqrt{n}\left[\widehat{\mathcal{H}}(u) - \mathcal{H}(u)\right] = \frac{1}{\sqrt{n}} \sum_{t=1}^{\lfloor un\rfloor} Dm(\hat{\psi}_{t,n})\cdot (Z_{t,n}-\psi_{t,n}) + o_P(1).
    \end{align*}
    By Lemma \ref{lem:bias}, $|\E(Z_{t,n}) - \psi_{t,n}| = \mathcal{O}(\log(n) n^{-\eta}) = o(n^{-\frac{1}{2}})$, and we obtain
    \begin{align}
        \sqrt{n}\left[\widehat{\mathcal{H}}(u) - \mathcal{H}(u)\right] = \frac{1}{\sqrt{n}} \sum_{t=1}^{\lfloor un\rfloor} Dm(\hat{\psi}_{t,n})\cdot (Z_{t,n}-\E Z_{t,n}) + o_P(1). \label{eqn:integrated-clt-2}
    \end{align}

    \noindent\underline{Step (ii):}
    We now apply the multiplier FCLT of Theorem \ref{thm:multiplier-FCLT} to the leading term in \eqref{eqn:integrated-clt-2}, with 
    \begin{align*}
        \hat{g}_{t,n} = Dm(\hat{\psi}_{t,n}) \mathds{1}_{A_n}, \qquad g_{t,n} = Dm(\psi_{t,n}), \qquad g_u = Dm(\psi_u), \qquad
        Y_{t,n} = Z_{t,n} - \E Z_{t,n}.
    \end{align*}
    For this definition of $g_{t,n}$ and $\hat{g}_{t,n}$, \eqref{ass:A4} holds, $\Phi_n$ is bounded, and $\Psi_n=\mathcal{O}(n^{1-\eta})$.
    Moreover, Proposition \ref{prop:NW} shows that 
    \begin{align*}
        \Lambda_n^2
        &\leq C \sum_{t=2L}^n \left(\|\hat{\psi}_{t,n}-\psi_{t-L,n}\|_{L_2}^2 + \|\psi_{t-L,n}-\psi_{t,n}\|^2 \right) \\
        &\leq C\left( \log(n)^2n^{\frac{1}{2}-r} + L_n^{2\eta} n^{1-2\eta} \right) + C \sum_{t=2L_n}^{L_n+\lceil n b_n\rceil} \left( \log(n)^2 (\tfrac{t-L_n}{n})^{2\eta} + \tfrac{1}{t-L_n}\right) \\
        &\leq C\left( \log(n)^2 n^{\frac{1}{2}-r} + L_n n^{1-2\eta} + \log(n)^2 \tfrac{(nb_n)^{2\eta+1}}{n^{2\eta}} + \log(nb_n) \right) \\
        &= \mathcal{O}\left(  \log(n)^2 n^{\frac{1}{2}-r} + L_n n^{1-2\eta} \right)
    \end{align*}
    It remains to check the conditions \eqref{ass:A1}, \eqref{ass:A2}, \eqref{ass:A3} for $Y_{t,n}$.
    To write $Y_{t,n}$ in the form $G_{t,n}(\beps_t)$ as in Appendix \ref{sec:multiplier}, note that $Y_{t,n}$ is a functional of the driving Brownian motion $B_s$, see Definition \eqref{eqn:def-Ito}. 
    Since all Polish spaces are Borel-isomorphic, there exists a Borel-isomorphism $\varphi:(0,1)\to C[0,1]$, and iid random variables $\epsilon_t\sim U(0,1)$ such that $\varphi(\epsilon_t) = [\tilde{B}_u]_{u\in[0,1]} = \sqrt{n}[B_{\frac{u+t-1}{n}} - B_{\frac{t-1}{n}}]_{u\in[0,1]} = \varphi(\epsilon_t)$, for a standard Brownian motion $\tilde{B}$. 
    With this notation, we may write
    \begin{align*}
        \frac{n^{H_{i/n}}}{\sigma_{i/n}} \chi_{i,n} &=  \frac{n^{H_{i/n}}}{\sigma_{i/n}}\int_{-\infty}^{i/n} g_{i,n}(s) \, dB_s = \widetilde{G}_{i,n}(\beps_i), \\
        g_{i,n}(s) &= \sigma_s \left[ (\tfrac{i}{n}-s)_+^{H_s-\frac{1}{2}} - 2 (\tfrac{i}{n}-\tfrac{1}{n}-s)_+^{H_s-\frac{1}{2}} + (\tfrac{i}{n}-\tfrac{2}{n}-s)_+^{H_s-\frac{1}{2}} \right], \\
        \widetilde{G}_{i,n}(\beps_j) &= \frac{n^{H_{i/n}}}{\sigma_{i/n}} \int_{-\infty}^{i/n} g_{i,n}(s) \, dB_{s+\frac{j-i}{n}} \\
        &=  \frac{n^{H_{i/n}}}{\sigma_{i/n}}\sum_{k=0}^\infty \int_{(i-k-1)/n}^{(i-k)/n}  g_{i,n}(s)\, dB_{s+\frac{j-i}{n}} \quad
        =\frac{n^{H_{i/n}}}{\sigma_{i/n}} \sum_{k=0}^\infty \int_{i-k-1}^{i-k}  g_{i,n}(\tfrac{s}{n})\, \tfrac{1}{\sqrt{n}}d\tilde{B}_{s+(j-i)} \\
        &= \sum_{k=0}^\infty \int_{i-k-1}^{i-k} \tilde{g}_{i,n}(s)\, d\tilde{B}_{s+(j-i)} \\
        &= \sum_{k=0}^\infty \zeta_{i,k,n}(\epsilon_{j-k}), \qquad \zeta_{i,k,n}:(0,1)\to\R, \; \zeta_{i,k,n}(\epsilon_{j-k})\in L_2(P), \\
        \tilde{g}_{i,n}(s) &= \frac{\sigma_{\frac{s}{n}}}{\sigma_{\frac{i}{n}}} n^{H_{\frac{i}{n}}-H_{\frac{s}{n}}}\,\bar{g}(s-i, H_{\frac{s}{n}}), \\
        \bar{g}_{i}(s, H) &= (-s)_+^{H-\frac{1}{2}} - 2 (-1-s)_+^{H-\frac{1}{2}} + (-2-s)_+^{H-\frac{1}{2}}.
    \end{align*}
    In particular, the Bernoulli shift of the $\epsilon_t$ is equivalent to shifting the driving Brownian motion. 

    \noindent\underline{Ad \eqref{ass:A2}:}
    The $\zeta_{i,k,n}(\epsilon_j)$ are centered Gaussian random variables with variance
    \begin{align*}
        \|\zeta_{i,k,n}(\epsilon_j)\|_{L_2}^2 
        &= \int_{i-k-1}^{i-k} |\tilde{g}_{i,n}(s)|^2\, ds \\
        &\leq C n^{2(H_{\frac{i}{n}} - H_{\frac{i-k}{n}}) + C n^{-\eta}} (k+1)^{2H_{\frac{i-k}{n}} -5 + C n^{-\eta}}\\
        &\leq C n^{2(H_{\frac{i}{n}} - H_{\frac{i-k}{n}})} (k+1)^{2H_{\frac{i-k}{n}} -5 +\delta},
    \end{align*}
    for $\delta>0$ small enough, and $n$ large enough.
    If $H_{\frac{i}{n}} \leq H_{\frac{i-k}{n}}$, then $\|\zeta_{i,k,n}(\epsilon_j)\|_{L_2}^2 \leq C(k+1)^{2\overline{H}-5+\delta} \leq C (k+1)^{-3}$ for $\delta$ small enough, because $\overline{H}<1$. 
    If instead $H_{\frac{i}{n}} > H_{\frac{i-k}{n}}$, we further distinguish two cases to obtain
    \begin{align}
        \|\zeta_{i,k,n}(\epsilon_j)\|_{L_2}^2 
        &\leq 
        \begin{cases}
             C (k+1)^{\frac{2}{1-\delta}(H_{\frac{i}{n}} - H_{\frac{i-k}{n}})} (k+1)^{2H_{\frac{i-k}{n}} -5 +\delta}, & k\geq n^{1-\delta},  \\
             C n^{C n^{-\delta \eta}} (k+1)^{2H_{\frac{i-k}{n}} -5 +\delta},& k< n^{1-\delta},
        \end{cases}  \nonumber \\
        & \leq 
        \begin{cases}
            C (k+1)^{\frac{2}{1-\delta} H_{\frac{i}{n}} - \frac{2\delta}{1-\delta} H_{\frac{i-k}{n}} -5 +\delta}, & k\geq n^{1-\delta},  \\
             C  (k+1)^{2\overline{H} -5 +\delta},& k< n^{1-\delta},
        \end{cases} \nonumber \\
        & \leq 
        \begin{cases}
            C (k+1)^{\frac{2}{1-\delta} \overline{H} -5 +\delta}, & k\geq n^{1-\delta},  \\
             C  (k+1)^{2\overline{H} -5 +\delta},& k< n^{1-\delta},
        \end{cases} \nonumber
        \\
        &\leq C (k+1)^{-3}, \label{eqn:phys-dep-mbm}
    \end{align}
    for $\delta$ small enough. 
    Since $\zeta_{i,k,n}(\epsilon_j)$ is Gaussian, the latter $L_2$ bound also holds in $L_q$.
    Thus, we have shown that for all $q\geq 2$, and for some $C=C_q$,
    \begin{align*}
        \|\widetilde{G}_{i,n}(\beps_0) - \widetilde{G}_{i,n}(\tilde{\beps}_{0,h})\|_{L_q} \leq C\|\zeta_{i,h,n}(\epsilon_j)\|_{L_2} &\leq C (h+1)^{-\frac{3}{2}}, \\
        \|\widetilde{G}_{i,n}(\beps_0)\|_{L_q} &\leq C.
    \end{align*}
    Now note that $Y_{t,n}$ is a function of the $\widetilde{G}_{i,n}(\beps_i)$, in particular
    \begin{align}
        Y_{i,n} = G_{t,n}(\beps_j) = 
        \begin{pmatrix}
            \widetilde{G}_{t,n}(\beps_j)^2 \\
            (\widetilde{G}_{t,n}(\beps_j)+2\widetilde{G}_{t-1,n}(\beps_{j-1})+\widetilde{G}_{t-2,n}(\beps_{j-2}))^2
        \end{pmatrix},\label{eq:defG}
    \end{align}
    and it is straightforward to derive
    \begin{align}
        \|G_{t,n}(\beps_0) - G_{t,n}(\tilde{\beps}_{0,h})\|_{L_q}  \leq C (h+1)^{-\frac{3}{2}}. \label{eqn:kernel-decay}
    \end{align}
    This establishes \eqref{ass:A2} with $\Theta_n=\mathcal{O}(1)$ and exponent $\beta=\frac{3}{2}$.

    \noindent\underline{Ad \eqref{ass:A1}:}
    Observe that
    \begin{align}
        &\left\|\widetilde{G}_{i,n}(\beps_0) - \widetilde{G}_{i-1,n}(\beps_0)\right\|_{L_2}^2 \nonumber \\
        &= \left\| \sum_{k=0}^\infty \int_{i-k-1}^{i-k} \tilde{g}_{i,n}(s)\, d\tilde{B}_{s-i} - \sum_{k=0}^\infty \int_{(i-1)-k-1}^{(i-1)-k} \tilde{g}_{i-1,n}(s)\, d\tilde{B}_{s-(i-1)} \right\|_{L_2}^2 \nonumber \\
        &= \left\| \sum_{k=0}^\infty \int_{i-k-1}^{i-k} \left[ \tilde{g}_{i,n}(s) - \tilde{g}_{i-1,n}(s-1)\right]\, d\tilde{B}_{s-i} \right\|_{L_2}^2 \nonumber \\
        &= \sum_{k=0}^\infty \int_{i-k-1}^{i-k} |\tilde{g}_{i,n}(s) - \tilde{g}_{i-1,n}(s-1)|^2\, ds \nonumber \\
        & \leq \sum_{k=0}^\infty \int_{i-k-1}^{i-k} \left| \frac{\sigma_{\frac{s}{n}}}{\sigma_{\frac{i}{n}}} n^{H_{\frac{i}{n}}-H_{\frac{s}{n}}} - \frac{\sigma_{\frac{s}{n}}}{\sigma_{\frac{i-1}{n}}} n^{H_{\frac{i-1}{n}}-H_{\frac{s}{n}}} \right|^2 |\bar{g}(s-i,H_{\frac{s}{n}})|^2\, ds \nonumber \\
        &\quad +  \sum_{k=0}^\infty \int_{i-k-1}^{i-k} \left|  \frac{\sigma_{\frac{s}{n}}}{\sigma_{\frac{i-1}{n}}} n^{H_{\frac{i-1}{n}}-H_{\frac{s}{n}}} \right|^2 |\bar{g}(s-i, H_{\frac{s}{n}}) - \bar{g}(s-i, H_{\frac{s-1}{n}})|^2\, ds \nonumber \\
        &\leq  \sum_{k=0}^\infty \left|1 - \frac{\sigma_{\frac{i}{n}}}{\sigma_{\frac{i-1}{n}}} n^{H_{\frac{i-1}{n}} - H_{\frac{i}{n}}} \right| \int_{i-k-1}^{i-k}  |\tilde{g}_{i,n}(s)|^2\, ds \nonumber \\
        &\quad +  \sum_{k=0}^\infty \int_{i-k-1}^{i-k} \left|  n^{H_{\frac{i-1}{n}}-H_{\frac{s}{n}}} \right|^2 |\bar{g}(s-i, H_{\frac{s}{n}}) - \bar{g}(s-i, H_{\frac{s-1}{n}})|^2\, ds \nonumber \\
        \begin{split}
        &\leq C\log(n)\, n^{-2\eta} \sum_{k=0}^\infty (k+1)^{-3}   \\
        &\quad + C  \sum_{k=0}^\infty  \log(n) n^{-\eta} n^{2H_{\frac{i-1}{n}} - 2 H_{\frac{i-k}{n}}} \int_{i-k-1}^{i-k}  |\bar{g}(s-i, H_{\frac{s}{n}}) - \bar{g}(s-i, H_{\frac{s-1}{n}})|^2\, ds .
        \end{split}\label{eqn:intpar-TV-1}
    \end{align}
    To bound the latter integral, we exploit the fact that
    \begin{align*}
        \frac{d}{dH} \bar{g}(s,H) 
        &= \mathds{1}_{s<0} \log(-s) (-s)^{H-\frac{1}{2}} \\
        &\quad - 2 \mathds{1}_{s<-1} \log(-s-1) (-s-1)^{H-\frac{1}{2}} \\
        &\quad+ \mathds{1}_{s<2} \log(-s-2) (-s-2)^{H-\frac{1}{2}}, \\
        \leadsto\quad \left|\frac{d}{dH} \bar{g}(s,H)  \right| 
        &\leq C (1+|\log s|) \min\left(|s|^{H-\frac{1}{2}}, |s|^{H-\frac{5}{2}}\right).
    \end{align*}
    Hence, the mean value theorem yields some $\tilde{H}_s$ between $H_{\frac{s}{n}}$ and $H_{\frac{s-1}{n}}$ such that, for $k\geq 1$,
    \begin{align*}
        & \quad \int_{i-k-1}^{i-k}  |\bar{g}(s-i, H_{\frac{s}{n}}) - \bar{g}(s-i, H_{\frac{s-1}{n}})|^2\, ds \\
        &\leq  \int_{i-k-1}^{i-k} |H_{\frac{s}{n}} - H_{\frac{s-1}{n}}|^2 \left|\frac{d}{dH} \bar{g}_{0,n}(s-i,H)|_{H=\tilde{H}_s}  \right|^2\, ds \\
        &\leq C n^{-2\eta} \int_{i-k-1}^{i-k} [1+|\log (s-i)|] \left[\mathds{1}_{|s-i|<1} |s-i|^{2\tilde{H}_{s}-1} + \mathds{1}_{|s-i|\geq 1} |s-i|^{2\tilde{H}_{s}-5} \right]\, ds \\
        &\leq C n^{-2\eta} \int_{i-k-1}^{i-k} [1+|\log (s-i)|] |s-i|^{2\tilde{H}_{s}-5} \, ds \\
        &\leq C n^{-2\eta} (k+1)^{2H_{\frac{i-k}{n}}-3+Ck^{-\eta}} \log(k)\\
        &\leq C n^{-2\eta} (k+1)^{2H_{\frac{i-k}{n}}-3+\delta},
    \end{align*}
    for $\delta>0$ small.
    This term may be further bounded as in \eqref{eqn:phys-dep-mbm} to find
    \begin{align*}
        \log(n) n^{-\eta} n^{2H_{\frac{i-1}{n}} - 2 H_{\frac{i-k}{n}}} \int_{i-k-1}^{i-k}  |\bar{g}(s-i, H_{\frac{s}{n}}) - \bar{g}(s-i, H_{\frac{s-1}{n}})|^2\, ds 
        &\leq C (k+1)^{-3}.
    \end{align*}
    Using this in \eqref{eqn:intpar-TV-1}, and exploiting the Gaussianity, we obtain for all $q\geq 2$
    \begin{align*}
        \left\|\widetilde{G}_{i,n}(\beps_0) - \widetilde{G}_{i-1,n}(\beps_0)\right\|_{L_q}^2
        &\leq C n^{-2\eta} \sum_{k=0}^\infty (k+1)^{-3} \leq C n^{-2\eta}.
    \end{align*}
    Since $\|\tilde{G}_{i,n}\|_{L_q}$ is bounded, it is straightforward to conclude from this and \eqref{eq:defG} that
    \begin{align}
        \left\|G_{i,n}(\beps_0) - G_{i-1,n}(\beps_0)\right\|_{L_2}^2
        &\leq C n^{-2\eta}, \label{eqn:kernel-regularity}
    \end{align}
    and hence \eqref{ass:A1} holds with $\Theta_n=\mathcal{O}(1)$ and $\Gamma_n = n^{1-\eta}$.

    \noindent\underline{Ad \eqref{ass:A3}:}
    For $u\in[0,1]$, define the limiting kernels
    \begin{align}
        \widetilde{G}_u(\beps_j) &= \int_{-\infty}^0 \bar{g}(s, H_u)\, d\tilde{B}_{s+j},\nonumber
    \intertext{and, in view of \eqref{eq:defG},} 
        G_u(\beps_j) &= \begin{pmatrix}
            \widetilde{G}_u(\beps_j)^2 \\ \left(\widetilde{G}_u(\beps_j) + 2\widetilde{G}_u(\beps_{j-1}) + \widetilde{G}_u(\beps_{j-2})\right)^2.
        \end{pmatrix} \label{eqn:kernel-limit}
    \end{align}
    Then
    \begin{align*}
        &\quad \| \widetilde{G}_{u}(\beps_0) - \widetilde{G}_{\lfloor un\rfloor, n}(\beps_0)\|_{L_q} \\
        &\leq C \|\widetilde{G}_{u}(\beps_0) - \widetilde{G}_{\lfloor un\rfloor, n}(\beps_0) \|_{L_2} \\
        &\leq \int_{-\infty}^{\lfloor un\rfloor} |\bar{g}(s-\lfloor un\rfloor, H_u) - \tilde{g}_{\lfloor un\rfloor, n}(s)|^2\, ds \\
        &= \int_{-\infty}^{0} \left|\bar{g}(s, H_u) - \frac{\sigma_{\frac{\lfloor un\rfloor +s}{n}}}{\sigma_{\frac{\lfloor un\rfloor}{n}}} n^{H_{\frac{\lfloor un\rfloor}{n}} - H_{\frac{\lfloor un\rfloor+s}{n}}  } \bar{g}(s, H_\frac{\lfloor un\rfloor+s}{n})\right|^2\, ds \\
        &\leq 2\int_{-\infty}^0 \left|\bar{g}(s, H_u) - \bar{g}(s, H_{\frac{\lfloor un\rfloor+s}{n}}) \right|^2\, ds \\
        &\quad + 2\int_{-\infty}^0 \left|1-\frac{\sigma_{\frac{\lfloor un\rfloor +s}{n}}}{\sigma_{\frac{\lfloor un\rfloor}{n}}} n^{H_{\frac{\lfloor un\rfloor}{n}} - H_{\frac{\lfloor un\rfloor+s}{n}}  }\right|^2 \left| \bar{g}(s, H_\frac{\lfloor un\rfloor+s}{n})\right|^2\, ds \\
        &\leq C n^{-2\eta} \int_{-1}^0 \,ds + C \int_{-\infty}^{-1}  (\tfrac{|s|}{n})^{2\eta} \sup_{H\in [\underline{H}, \overline{H}]} \left| \tfrac{d}{dH} \overline{g}(s, H) \right|^2 \, ds \\
        &\quad  + C \log(n)  \int_{-n^{1-\delta}}^{-1} (\tfrac{|s|}{n})^{2\eta}  \sup_{H\in [\underline{H}, \overline{H}]} \left| \overline{g}(s, H) \right|^2 \, ds \\
        &\quad  + C \log(n)  \int_{-\infty}^{-n^{1-\delta}} n^{2\overline{H}} \sup_{H\in [\underline{H}, \overline{H}]} \left| \overline{g}(s, H) \right|^2 \, ds \\
        &\leq C n^{-2\eta} + C\log(n) n^{-2\eta} \int_{-\infty}^{-1} |s|^{2\overline{H}+2\eta-5}\, ds + C\log(n)\int_{-\infty}^{-n^{1-\delta}} |s|^{2\overline{H}-5}\, ds\\
        &\leq C \log(n)n^{-2\eta} + C \log(n) n^{-2(1-\delta)} \quad \leq C \log(n) n^{-2\eta}
    \end{align*}
    for any small $\delta>0$. 
    Here, we used that $|\bar{g}(s, H)| \leq C|s|^{H-\frac{5}{2}}$ and $\frac{d}{dH}|\bar{g}(s,H)|\leq C (1+\log|s|)|s|^{H-\frac{5}{2}} $ for $|s|\geq 1$.
    That is, we find that for any $q\geq 2$
    \begin{align*}
        \|\widetilde{G}_{\lfloor un\rfloor, n}(\beps_0) - \widetilde{G}_{u}(\beps_0)\|_{L_q}
        \leq C n^{-\eta}.
    \end{align*}
    Again, this directly yields that
    \begin{align}
        \|G_{\lfloor un\rfloor, n}(\beps_0) - G_{u}(\beps_0)\|_{L_q}
        \leq C n^{-\eta}, \label{eqn:kernel-limit-conv}
    \end{align}
    and hence $\int_0^1  \|G_{\lfloor un\rfloor, n}(\beps_0) - G_{u}(\beps_0)\|_{L_2} \, du \to 0$ as $n\to\infty$. 
    This establishes \eqref{ass:A3}.

    \noindent\underline{Rate constraints:}
    In the previous steps, we have verified the conditions of Theorem \ref{thm:multiplier-FCLT} for any $q\geq 2$, for $\beta=\frac{3}{2}$, and with $\Gamma_n\asymp n^{1-\eta}$, $\Lambda_n^2\asymp \log(n)^2n^{\frac{1}{2}-r} + L_n^{2\eta} n^{1-2\eta}$, $\Psi_n\asymp n^{1-\eta}$, $\Phi_n=\mathcal{O}(1)$, $\Theta_n=\mathcal{O}(1)$. 
    Upon choosing $q$ large enough, we have $\xi(q,\beta) = \frac{\beta-1}{4\beta-2} = \frac{1}{8}$, hence
    \begin{align*}
        (\Gamma_n + \Psi_n)^{\frac{\beta-1}{2\beta}} \sqrt{\log(n)} n^{-\xi(q,\beta)} = n^{\frac{1-\eta}{6}} \sqrt{\log(n)} n^{-\frac{1}{8}},
    \end{align*}
    which tends to zero if $\eta > \frac{1}{2}$.
    Moreover, upon choosing $q$ large enough, we have
    \begin{align*}
        n^{-\frac{1}{2}} \Lambda_n^2 + \Lambda_n L_n^{1-\beta} + n^{\frac{1}{q}-\frac{1}{2}}L_n \to 0 \\
        \Longleftarrow n^{\frac{1}{2} -r} \log(n)^2 \ll L_n \ll n^{\frac{1}{2}-\delta} \quad \text{for some $\delta>0$, and } L_n \ll n^{1-\frac{1}{4\eta}}.
    \end{align*}
    Thus, we have shown that Theorem \ref{thm:multiplier-FCLT} is applicable under the conditions formulated in Theorem \ref{thm:integrated-clt}.

    \noindent\underline{Determining the asymptotic variance:}
    The asymptotic local variance is given by
    \begin{align*}
        \Sigma_u &= \sum_{h=-\infty}^\infty g_u \Cov(G_u(\beps_0), G_u(\beps_h)) g_u^T, \\
        g_u &= \frac{1}{2\,\Gamma_{H_u}(0)}\begin{pmatrix}
            -1\\ 2^{-2H_u}
        \end{pmatrix}.
    \end{align*}
    The autocovariances may be computed as in Lemma \ref{lem:cov}, hence $\Cov(G_u(\beps_0), G_u(\beps_h)) = \Sigma_{H_u}(h)$ as therein. 
    Thus,
    \begin{align}
        \Sigma_u &= \tau^2(H_u), \nonumber \\
        \tau^2(H)&=\frac{1}{4 \Gamma_{H}(0)^2} \sum_{h=-\infty}^\infty (-1, 2^{-2H}) \Sigma_{H}(h) \begin{pmatrix}
            -1 \\ 2^{-2H}
        \end{pmatrix} \nonumber \\
        &= \frac{1}{2 \Gamma_{H}(0)^2} \sum_{h=-\infty}^\infty \Big\{ \Gamma_{H}(h)^2 + 2^{-4H} (2\Gamma_{H}(h) + \Gamma_{H}(h-1)+\Gamma_{H}(h+1))^2 \nonumber \\
        &\qquad \qquad - 2^{-2H} \left[ (\Gamma_{H}(h) + \Gamma_H(h-1))^2 + (\Gamma_H(h)+\Gamma_H(h-1))^2  \right] \Big\} \nonumber \\
        \begin{split}
        &= \frac{1}{2 \Gamma_{H}(0)^2} \sum_{h=-\infty}^\infty \Big\{ \Gamma_{H}(h)^2 + 2^{-4H} (2\Gamma_{H}(h) + \Gamma_{H}(h-1)+\Gamma_{H}(h+1))^2 \\
        &\qquad \qquad - 2^{-2H+1}  (\Gamma_{H}(h) + \Gamma_H(h-1))^2 \Big\}.
        \end{split} \label{eqn:asymp-var}
    \end{align}
\end{proof}


\begin{proof}[Proof of Theorem \ref{thm:studentizing}]
    By Theorem \ref{thm:rate}, and as established in the proof of Theorem \ref{thm:integrated-clt}, we have for any $q\geq 2$
    \begin{align*}
        \max_{t=2L_n,\ldots, n} |\widehat{H}_n(\tfrac{t}{n}) - H_{\frac{t}{n}}| \quad\pconv\quad 0.
    \end{align*}
    Hence,
    \begin{align*}
        \sup_{u\in[0,1]} \left|\frac{1}{n}\sum_{t=2L}^{\lfloor un\rfloor} \tau^2(H(\tfrac{t}{n})) - \frac{1}{n}\sum_{t=2L}^{\lfloor un\rfloor} \tau^2(\widehat{H}_n(\tfrac{t}{n})) \right| \pconv 0.
    \end{align*}
    Moreover, continuity of $v\mapsto H_v$, and $L_n/n\to 0$, yields
    \begin{align*}
        \sup_{u\in[0,1]} \left|\int_0^u \tau^2(H_v) \, dv - \frac{1}{n}\sum_{t=2L_n}^{\lfloor un\rfloor} \tau^2(H(\tfrac{t}{n})) \right| \to 0.
    \end{align*}
\end{proof}


\begin{proof}[Proof of Proposition \ref{prop:CUSUM}]
    Convergence under the null is a direct consequence of Theorems \ref{thm:integrated-clt} and \ref{thm:studentizing}.
    Under the alternative, the function $u\mapsto \mathcal{H}(u)$ is not linear.
    This holds because $v\mapsto H_v$ is continuous, and hence $H_v$ can not deviate at a single point $v$. 
    Thus, $\sup_{u\in[0,1]} |\mathcal{H}(u)-u \mathcal{H}(1)|>0$.
    Moreover, Theorem \ref{thm:integrated-clt} yields $\|\widehat{\mathcal{H}}- \mathcal{H}\|_\infty = \mathcal{O}(1/\sqrt{n})$ and thus $\sqrt{n} T_{\mathrm{CUSUM}}(\widehat{\mathcal{H}}) \to \infty$ in probability, at rate $\sqrt{n}$.
    On the other hand, $q_n(\alpha)\pconv q(\alpha)$ as a consequence of Theorem \ref{thm:studentizing}.
    This establishes consistency of the test.
\end{proof}

\begin{proof}[Proof of Proposition \ref{prop:GOF}]
    The first claim holds because $q_n(\alpha)\to q(\alpha)$ by Theorem \ref{thm:studentizing}, where $q(\alpha)$ is the $1-\alpha$ quantile of $Z=\sup_u |W(\int_0^u \tau^2(H_v)\, dv)|$, and $\sqrt{n}\widehat{T}(\mathcal{G}_0) \leq \sqrt{n}\widehat{T}(H)\wconv Z $.
    
    We proceed to prove the second claim.
    If $\mathcal{G}_0$ is closed, then $\widetilde{\mathcal{G}}_0 = \{ u\mapsto \int_0^u \tilde{H}(v)\, dv\,|\, \tilde{H}\in\mathcal{G}_0 \}$ is also closed, and $\int H(v)\, dv\notin \widetilde{\mathcal{G}}_0$. Hence, $\sup_{u\in[0,1]} |\int_0^u \tilde{H}(v) \, dv - \int_0^u \tilde{H}(v) \, dv| > 0$ for all $\tilde{H}\in \mathcal{G}_0$. 
    As $\sqrt{n}(\widehat{\mathcal{H}} - \mathcal{H}) = o_P(1)$, this implies that $\sqrt{n}\widehat{T}(\mathcal{G}_0)\pconv \infty$.
    On the other hand, Theorem \ref{thm:studentizing} yields convergence of $q_n(\alpha)$ to a finite number, establishing consistency of the test.
\end{proof}

\bibliography{itombm}
\bibliographystyle{apalike}

\end{document}